\DeclareMathOperator{\pr}{pr}
\DeclareMathOperator{\id}{id}
\DeclareMathOperator{\sgn}{sgn}
\DeclareMathOperator{\Hom}{Hom}
\DeclareMathOperator{\sHom}{\mathcal{H}\textit{om}}
\DeclareMathOperator{\Ext}{Ext}
\DeclareMathOperator{\Coh}{Coh}
\DeclareMathOperator{\sExt}{\mathcal{E}\textit{xt}}
\DeclareMathOperator{\D}{D}
\DeclareMathOperator{\Ho}{H}
\DeclareMathOperator{\sHo}{\mathcal H}
\DeclareMathOperator{\Hilb}{Hilb}
\DeclareMathOperator{\Aut}{Aut}
\DeclareMathOperator{\supp}{supp}
\DeclareMathOperator{\Pic}{Pic}
\DeclareMathOperator{\triv}{triv}
\DeclareMathOperator{\im}{im}
\DeclareMathOperator{\FM}{FM}
\DeclareMathOperator{\cone}{cone}
\DeclareMathOperator{\pt}{pt}
\DeclareMathOperator{\DAut}{DAut}
\newcommand{\sym}{\mathfrak S}
\newcommand{\C}{\mathbbm C}
\newcommand{\N}{\mathbbm N}
\newcommand{\Z}{\mathbbm Z}
\newcommand{\A}{\mathcal A}
\newcommand{\B}{\mathcal B}
\newcommand{\F}{\mathcal F}
\newcommand{\E}{\mathcal E}
\newcommand{\alt}{\mathfrak a}
\newcommand{\reg}{\mathcal O}
\newcommand{\I}{\mathcal I}
\newcommand{\eps}{\varepsilon}
\renewcommand{\P}{\mathbbm P}
\renewcommand{\theta}{\vartheta}
\renewcommand{\rho}{\varrho}
\renewcommand{\phi}{\varphi}
\renewcommand{\_}{\underline{\,\,\,\,}}
\newtheorem{theorem}{Theorem}[section]
\newtheorem{prop}[theorem]{Proposition}
\newtheorem{lemma}[theorem]{Lemma}
\newtheorem{cor}[theorem]{Corollary}
\theoremstyle{definition}
\newtheorem{example}[theorem]{Example}
\newtheorem{remark}[theorem]{Remark}
\begin{document}
\title[New derived autoequivalences of Hilbert schemes and Kummer varieties]{New derived autoequivalences of Hilbert schemes and generalised Kummer varieties}
\author{Andreas Krug}
\address{Universit\" at Bonn}
\email{akrug@math.uni-bonn.de}
\begin{abstract}
We show that for every smooth projective surface $X$ and every $n\ge 2$ the push-forward along the diagonal embedding gives a 
$\P^{n-1}$-functor into the $\sym_n$-equivariant derived category of $X^n$. Using the Bridgeland--King--Reid--Haiman equivalence this yields a new autoequivalence of the derived category of the Hilbert scheme of $n$ points on $X$. In the case that the canonical bundle of $X$ is trivial and $n=2$ this autoequivalence coincides with the known EZ-spherical twist induced by the boundary of the Hilbert scheme. We also generalise the 16 spherical objects on the Kummer surface given by the exceptional curves to $n^4$ orthogonal $\P^{n-1}$-Objects on the generalised Kummer variety.
\end{abstract}
\maketitle

\section{Introduction}
For every smooth projective surface $X$ over $\C$ and every $n\in\N$ there is the Bridgeland--King--Reid--Haiman equivalence (see \cite{BKR} and \cite{Hai})
\[\Phi\colon \D^b(X^{[n]})\xrightarrow \simeq \D^b_{\sym_n}(X^n)\]
between the bounded derived category of the Hilbert scheme of $n$ points on $X$ and the $\sym_n$-equivariant derived category of the cartesian product of $X$. In \cite{Plo} Ploog used this to give a general construction which associates to every autoequivalence $\Psi\in \Aut(\D^b(X))$ an autoequivalence $\alpha(\Psi)\in\Aut(\D^b(X^{[n]}))$ on the Hilbert scheme.
Recently, Ploog and Sosna \cite{PS} gave a construction that produces out of spherical objects (see \cite{ST}) on the surface $\P^n$-objects (see \cite{HT}) on $X^{[n]}$ which in turn induce further derived autoequivalences.
On the other hand, there are only very few autoequivalences of $\D^b(X^{[n]})$ known to exist independently of $\D^b(X)$:
\begin{itemize}
\item There is always an involution given by tensoring with the alternating representation in $\D^b_{\sym_n}(X^n)$, i.e.\ with the one-dimensional representation on which $\sigma\in \sym_n$ acts via multiplication by $\sgn(\sigma)$. 
\item Addington introduced in \cite{Add} the notion of a $\P^n$-functor generalising the $\P^n$-objects of Huybrechts and Thomas. He showed that for $X$ a K3-surface and $n\ge 2$ the Fourier--Mukai transform $F_a\colon \D^b(X)\to \D^b(X^{[n]})$ induced by the universal sheaf is a $\P^{n-1}$-functor. This yields an autoequivalence of $\D^b(X^{[n]})$ for every K3-surface $X$ and every $n\ge 2$. 
\item For $X=A$ an abelian surface the pull-back along the summation map $\Sigma\colon A^{[n]}\to A$ is a $\P^{n-1}$-functor and thus induces a derived autoequivalence (see \cite{Mea}).
\item The \textit{boundary of the Hilbert scheme} $\partial X^{[n]}$ is the codimension 1 subvariety of points representing non-reduced subschemes of $X$. For $n=2$ it equals $X^{[2]}_\Delta:=\mu^{-1}(\Delta)$ where $\mu\colon X^{[2]}\to S^2X$ denotes the Hilbert-Chow morphism.
For $n=2$ and $X$ a surface with trivial canonical bundle it is known (see \cite[examples 8.49 (iv)]{Huy}) that every line bundle on the boundary of the Hilbert scheme
is an EZ-spherical object (see \cite{Hor}) and thus also induces an autoequivalence. 
We will see in remark \ref{indy} that the induced
automorphisms given by different choices of line bundles on $X^{[2]}_\Delta$ only differ by twists with line bundles on $X^{[2]}$. 
Thus, we will just speak of \textit{the} autoequivalence induced by the boundary referring to the automorphism induced by the EZ-spherical object $\reg_{\mu|X^{[2]}_\Delta}(-2)$.
\end{itemize}
In this article we generalise this last example to surfaces with arbitrary canonical bundle and to arbitrary $n\ge 2$. More precisely, we consider the functor $F\colon \D^b(X)\to \D^b_{\sym_n}(X^n)$ which is defined as the composition of the functor $\triv\colon \D^b(X)\to \D^b_{\sym_n}(X)$ given by equipping every object with the trivial $\sym_n$-linearisation and the push-forward $\delta_*\colon \D^b_{\sym_n}(X)\to \D^b_{\sym_n}(X^n)$ along the diagonal embedding. Then we show in section \ref{dia} the following.
\begin{theorem}\label{main}
For every $n\in \N$ with $n\ge 2$ and every smooth projective surface $X$ 
the functor $F\colon \D^b(X)\to \D^b_{\sym_n}(X^n)$ is a $\P^{n-1}$-functor.
\end{theorem}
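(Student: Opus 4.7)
The plan is to verify the three conditions for $F$ to be a $\P^{n-1}$-functor in Addington's sense: a decomposition $RF\cong\bigoplus_{i=0}^{n-1}H^i$ for some autoequivalence $H$ of $\D^b(X)$, the correct upper-triangular form of the induced multiplication on $RF$, and a Serre-duality relation between the left and right adjoints.

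First I compute the adjoints. In characteristic $0$, since $\sym_n$ is finite, the functor $\triv$ has both left and right adjoints equal to $(-)^{\sym_n}$, and the closed embedding $\delta$ satisfies $\delta^!\cong \delta^*(-)\otimes\omega_\delta[-2(n-1)]$ with $\omega_\delta=\omega_X^{-(n-1)}$ carrying the trivial $\sym_n$-action (permutations act trivially on a tensor power of a line bundle). Setting $H(E):=E\otimes\omega_X^{-1}[-2]$, this gives
\[L=(-)^{\sym_n}\circ\delta^*,\qquad R=(-)^{\sym_n}\circ\delta^!=H^{n-1}\circ L,\]
which is precisely the Serre-duality compatibility required of a $\P^{n-1}$-functor.

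To compute $RF$, note that the normal bundle of $\delta$ is $N_\delta\cong T_X\otimes V$ with $V$ the standard $(n-1)$-dimensional representation of $\sym_n$, and by formality of the diagonal (HKR/Kapranov) we have in $\D^b_{\sym_n}(X)$
\[\delta^*\delta_*\reg_X\cong\bigoplus_{k=0}^{2(n-1)}\Lambda^k(\Omega_X\otimes V^\vee)[k].\]
The key representation-theoretic step is to identify $\bigl(\Lambda^{2j}(\Omega_X\otimes V^\vee)\bigr)^{\sym_n}\cong \omega_X^j$ for $0\le j\le n-1$, with vanishing invariants in all other degrees. I would obtain this via a character argument: splitting $[\Omega_X]=x_++x_-$ virtually into Chern roots with $x_+x_-=[\omega_X]$,
\[\tr\bigl(\sigma\mid\Lambda^t(\Omega_X\otimes V^\vee)\bigr)=\tr\bigl(\sigma\mid\Lambda^{tx_+}V^\vee\bigr)\cdot\tr\bigl(\sigma\mid\Lambda^{tx_-}V^\vee\bigr),\]
and averaging over $\sym_n$ reduces to $\sum_{a,b}(tx_+)^a(tx_-)^b\dim\bigl(\Lambda^aV^\vee\otimes\Lambda^bV^\vee\bigr)^{\sym_n}$. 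Using that each $\Lambda^aV^\vee$ is the irreducible self-dual Specht module for the hook $(n-a,1^a)$, only the diagonal terms $a=b$ survive, producing the K-class $\sum_{a=0}^{n-1}\omega_X^at^{2a}$. The Schur--Weyl decomposition $\Lambda^k(\Omega_X\otimes V^\vee)=\bigoplus_\lambda S_\lambda\Omega_X\otimes S_{\lambda'}V^\vee$ together with the fact that among the $S_\lambda\Omega_X$ the only rank-one summand (for even $k$) is $S_{(j,j)}\Omega_X=\omega_X^j$ lifts the K-class identity to a sheaf-level identification. Combined with the shift and twist from $\delta^!$ this yields $RF(E)\cong\bigoplus_{i=0}^{n-1}H^iE$.

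It remains to verify the monad condition: the composition $HRF\hookrightarrow RFRF\to RF$ is an isomorphism onto the higher summands $H\oplus H^2\oplus\cdots\oplus H^{n-1}$. This requires identifying the algebra structure on $RF$ with the wedge product on $\Lambda^\bullet N_\delta^\vee$ and tracking the counit $\delta^*\delta_*\to\id$ through the formality equivalence and the invariants functor. I expect this to be the main obstacle, since the decomposition of $RF$ and the Serre-duality relation fall out cleanly from the computations above while the multiplication demands careful bookkeeping; where possible I would appeal to reductions (Addington's criterion or its refinements) that derive this condition from the decomposition and the Serre compatibility already in hand.
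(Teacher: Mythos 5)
Your treatment of conditions (i) and (iii) is essentially sound, and in places it takes a genuinely different route from the paper: where the paper quotes Scala's explicit computation of $[\wedge^{2\ell}(V\otimes\rho)]^{\sym_n}$ (spanned by the image of $\omega^\ell=\sum_i ue_i\wedge ve_i$), you derive the same answer by a character count plus the Cauchy decomposition $\Lambda^k(\Omega_X\otimes V^\vee)\cong\bigoplus_\lambda S_\lambda\Omega_X\otimes S_{\lambda'}V^\vee$ and the observation that a rank-one total forces the unique surviving summand to be $S_{(j,j)}\Omega_X=\omega_X^j$. That is a valid alternative and arguably more self-contained. Two smaller points: (a) your appeal to ``formality of the diagonal (HKR/Kapranov)'' does not literally apply to the small diagonal $\delta\colon X\to X^n$; the decomposition $\delta^*\delta_*\simeq(\_)\otimes\bigoplus_i\wedge^i N^\vee[i]$ needs the Arinkin--C\u{a}ld\u{a}raru criterion, i.e.\ that the normal bundle sequence splits, which here it does via the averaging retraction $T_X\otimes\C^n\to T_X$ --- you should say this. (b) Your direct derivation of $R\simeq H^{n-1}L$ from $\delta^!\simeq\delta^*(\_)\otimes\det N[-2(n-1)]$ and the triviality of the $\sym_n$-action on $\det(T_X\otimes\rho)=\omega_X^{-(n-1)}\otimes(\det\rho)^{\otimes 2}$ is fine.

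The genuine gap is condition (ii). You correctly identify what must be shown --- that the monad multiplication on $\delta^!\delta_*$ is the wedge product, so that the composite $HRF\hookrightarrow RFRF\to RF$ has an invertible ``subdiagonal'' --- but you then defer it, hoping for ``reductions that derive this condition from the decomposition and the Serre compatibility already in hand.'' No such reduction exists: condition (ii) is independent of (i) and (iii), and it is exactly the non-formal content of being a $\P^{n-1}$-functor. The paper closes this by two concrete steps. First (its Lemma on the monad multiplication), it identifies $\iota^!E$ with $\sHom(\iota_*\reg_Z,E)$ and the counit with evaluation $\phi\mapsto\phi(1)$, so that the monad multiplication becomes the Yoneda product on $\sHom(\iota_*\reg_Z,\iota_*\reg_Z)$, which is known to correspond to the wedge product on $\bigoplus_i\wedge^iN[-i]$ under a suitable choice of the decomposition isomorphism. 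Second, it uses the explicit generators $\omega^\ell$ of the invariant subspaces: since $\omega^\ell\wedge\omega=\omega^{\ell+1}$ (up to a nonzero scalar) and the odd-degree invariants vanish, the matrix of $HRF\to RF$ is precisely the shift matrix with $1$'s on the subdiagonal and $0$'s elsewhere. Your character-theoretic description of the invariants, while it computes their isomorphism type, does not by itself exhibit generators on which you can evaluate the wedge product, so to complete the argument along your lines you would either have to produce such generators (at which point you have rederived Scala's lemma) or carry out the multiplicativity of the Cauchy decomposition explicitly. As written, the proof is incomplete at its most essential point.
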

In section \ref{BKR} we show that for $n=2$ the induced autoequivalence coincides under $\Phi$ with the autoequivalence induced by the boundary. In section \ref{comp} we compare the autoequivalence induced by $F$ to some other derived autoequivalences of $X^{[n]}$ showing that it differs essentially from the standard autoequivalences and the autoequivalence induced by $F_a$. In particular, the Hilbert scheme always has non-standard autoequivalences even if $X$ is a Fano surface.
In the last section we consider the case that $X=A$ is an abelian surface. We show that after restricting our $\P^{n-1}$-functor from $A^{[n]}$ to the generalised Kummer variety $K_{n-1}A$ it splits into $n^4$ pairwise orthogonal 
$\P^{n-1}$-objects. They generalise the 16 spherical objects on the Kummer surface given by the line bundles $\reg_C(-2)$ on the  exceptional curves.  
\vspace{0.5cm}

\textbf{Acknowledgements}: The author wants to thank Daniel Huybrechts and Ciaran Meachan for helpful discussions.  
This work was supported by the SFB/TR 45 of the DFG (German Research Foundation).
As communicated to the author shortly before he posted this article on the ArXiv, Will Donovan independently discovered the $\P^n$-functor $F$. The author also thanks Nicolas Addington and Will Donovan for pointing out a mistake in the first version of the paper. 
\section{$\P^n$-functors} 
 A \textit{$\P^n$-functor} is defined in \cite{Add} as a functor $F\colon \A\to \B$ of triangulated categories admitting left and right adjoints $L$ and $R$ such that
\begin{enumerate}
 \item There is an autoequivalence $H$ of $\A$ such that 
\[RF\simeq \id\oplus H\oplus H^2\oplus\dots \oplus H^n\,.\]
\item The map 
\[HRF\hookrightarrow RFRF\xrightarrow{R\eps F} RF\]
with $\eps$ being the counit of the adjunction is, when written in the components
%\label{compo}
\begin{align*}H\oplus H^2\oplus\dots\oplus H^n\oplus H^{n+1}\to \id\oplus H\oplus H^2\oplus\dots\oplus H^n\,,\end{align*}
of the form
\begin{align}\label{matrix}\begin{pmatrix}
  * & * &\cdots &*&*\\
1&*&\cdots&*&*\\
0&1&\cdots&*&*\\
\vdots&\vdots&\ddots&\vdots&\vdots\\
0&0&\cdots&1&* 
  \end{pmatrix}
\,.\end{align}
\item $R\simeq H^nL$. If $\A$ and $\B$ have Serre functors, this is equivalent to $S_\B FH^n\simeq FS_\A$.
\end{enumerate}
In the following we always consider the case that $\A$ and $\B$ are (equivariant) derived categories of smooth projective varieties and $F$ is a Fourier--Mukai transform. 
The \textit{$\P^n$-twist} associated to a $\P^n$-functor $F$ is defined as the double cone 
\[P_F:=\cone\left(\cone(FHR\to FR)\to \id  \right)\,.\]
The map defining the inner cone is given by the composition 
\[FHR\xrightarrow{FjR}FRFR\xrightarrow{\eps FR-FR\eps}FR\]
where $j$ is the inclusion given by the decomposition in (i). The map defining the outer cone is induced by the counit
$\eps\colon FR\to \id$ (for details see \cite{Add}).
Taking the cones of the Fourier--Mukai transforms indeed makes sense, since all the occurring maps are induced by maps between the integral kernels (see \cite{AL}). 
We set $\ker R:=\{B\in \B\mid RB=0\}$. By the adjoint property it equals the right-orthogonal complement 
$(\im F)^\perp$.
\begin{prop}[{\cite[section 3]{Add}}]\label{twistaction}
Let $F\colon \A\to \B$ be a $\P^n$-functor.
\begin{enumerate}
\item We have $P_F(B)=B$ for $B\in \ker R$.
\item $P_F\circ F\simeq F\circ H^{n+1}[2]$.
\item The objects in $\im F\cup \ker R$ form a spanning class of $\B$.
\item $P_F$ is an autoequivalence.
\end{enumerate}
\end{prop}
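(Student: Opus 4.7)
The plan is to unpack the double-cone defining $P_F$ into two distinguished triangles and then dispatch the four claims in turn, relying only on axioms (i)--(iii) and the triangle identities for the adjunction $F \dashv R$. Concretely I set $C := \cone(FHR \to FR)$, yielding a distinguished triangle $C \to \id \to P_F \to C[1]$ in which the second map is induced by the counit $\eps\colon FR\to \id$. For assertion (i), if $RB = 0$ then both $FR(B)$ and $FHR(B)$ vanish, so $C(B)=0$ and the outer triangle collapses to $P_F(B)\simeq B$.

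For (ii) I apply the construction to $FA$ for $A \in \A$. Axiom (i) gives
\[FRFA \simeq \bigoplus_{i=0}^{n} FH^iA, \qquad FHRFA \simeq \bigoplus_{i=1}^{n+1} FH^iA,\]
and axiom (ii) pins down the matrix of the inner map. The central technical step, and the one I expect to be the main obstacle, is to show that despite the unknown $*$-entries above the diagonal one still has
\[C(FA)\simeq FA \oplus FH^{n+1}A[1]\,.\]
The intuition is that the subdiagonal of identities pairs each middle summand of the source with the corresponding summand of the target; a triangular change of basis in the derived category clears the $*$-entries and leaves only the extremal kernel summand $FH^{n+1}A$ and the extremal cokernel summand $FA$. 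Granting this, the outer cone is computed using the triangle identity $\eps F \circ F\eta = \id_F$: the composition $FA \hookrightarrow FRFA \xrightarrow{\eps F} FA$ is the identity, so the induced map $C(FA)\to FA$ kills the $FH^{n+1}A[1]$-summand and is the identity on the $FA$-summand, leaving $P_F F A \simeq FH^{n+1}A[2]$.

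Assertion (iii) is a clean adjunction exercise: if $B$ is right-orthogonal to every $FA$ then $\Hom(A, RB[k])=0$ for all $A, k$, forcing $RB=0$ and hence $B\in \ker R$; taking the test object to be $B$ itself then gives $\Hom(B,B)=0$ and so $B=0$. Left-orthogonality is handled symmetrically via $L$.

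For (iv) I would invoke the standard criterion that a Fourier--Mukai transform with left and right adjoints which induces isomorphisms on all $\Hom$-spaces between objects of a spanning class, and which commutes with Serre functors, is an equivalence. By (i) and (ii) the functor $P_F$ sends $\ker R$ to itself by the identity and $\im F$ to itself via $FA \mapsto FH^{n+1}A[2]$, so it is fully faithful on each half of the spanning class from (iii); the cross $\Hom$-groups between $\im F$ and $\ker R$ vanish by definition of $\ker R$, and the same vanishing after applying $P_F$ can be read off from the two defining triangles. Compatibility with Serre functors is precisely what axiom (iii), $R\simeq H^n L$, encodes, completing the proof.
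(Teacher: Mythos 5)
The paper offers no proof of this proposition at all: it is quoted from Addington's paper (hence the citation in the header), so the only thing to assess is whether your outline correctly reconstructs that argument. Parts (i) and (iii) are fine. The genuine gap is in part (ii), at precisely the step you flag as the main obstacle. The triangular change of basis does \emph{not} yield $C(FA)\simeq FA\oplus FH^{n+1}A[1]$: the invertible entries of the inner map sit only on the components $FH^{l}A\to FH^{l}A$ for $1\le l\le n$, so row and column operations pivoting on them clear every $*$-entry \emph{except} the corner component $c\colon FH^{n+1}A\to FA$, which has no unit in its row or its column to pivot against. The reduction therefore only gives $C(FA)\simeq \cone(c)$, and without knowing $c=0$ the claimed splitting fails. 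The missing input is exactly the tool you reserve for the outer cone: by naturality of $\eps$ one has $\eps F\circ(\eps FRF-FR\eps F)=0$, so $\eps F$ descends to a map $\bar\eps\colon C(F)\to F$; the triangle identity $\eps F\circ F\eta=\id_F$ then shows that the map $F\to C(F)$ is a split monomorphism, which both forces $c=0$ (consecutive maps in the triangle $FH^{n+1}\xrightarrow{\ c\ }F\to C(F)$ compose to zero) and computes the outer cone in one stroke. A second omission in (ii): the inner map is $(\eps FR-FR\eps)\circ FjR$ whiskered with $F$, not the map of axiom (ii); you must check that the extra term $\eps FRF\circ FjRF=\bigl((\eps F)\circ(Fj)\bigr)RF$ contributes only to the components $FH^{l}A\to FH^{l-1}A$, i.e.\ lands in the $*$-region and does not disturb the invertible subdiagonal or the vanishing below it.

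In part (iv) you conflate object-level with morphism-level statements: knowing $P_F(B)\cong B$ for $B\in\ker R$ and $P_FFA\cong FH^{n+1}A[2]$ does not by itself show that $P_F$ induces isomorphisms on $\Hom$-spaces between spanning-class objects. One needs the naturality of these isomorphisms (the first is the component of the canonical transformation $\id\to P_F$ from the outer cone, the second is natural because all maps involved are morphisms of Fourier--Mukai kernels) and then a diagram chase; likewise the Serre compatibility of $P_F$, as opposed to that of $F$, requires its own short argument from axiom (iii). These are standard but they are the actual content of the full-faithfulness step. None of the gaps is unfixable, but as written the central claim of (ii) does not follow from the reduction you describe.
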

\begin{example}
\begin{enumerate}\label{ex}
 \item Let $\B=\D^b(X)$ for a smooth projective variety $X$. A \textit{$\P^n$-object} (see \cite{HT}) is an object $E\in \B$ 
such that $E\otimes \omega_X\simeq E$ and $\Ext^*(E,E)\cong \Ho^*(\P^n,\C)$ as $\C$-algebras (the ring structure on the left-hand side is the Yoneda product and on the right-hand side the cup product). 
A $\P^n$-object can be identified with the $\P^n$-functor
\[F\colon \D^b(\pt)\to \B\quad,\quad \C\mapsto E\]
with $H=[-2]$. Note that the right adjoint is indeed given by $R=\Ext^*(E,\_)$. The $\P^n$-twist associated to the functor $F$ is the same as the $\P^n$-twist associated to the object $E$ as defined in \cite{HT}.
\item A $\P^1$-functor is the same as a \textit{spherical functor} (see \cite{Ann}) where the unit 
\[\id\xrightarrow{\eta} RF\to H\]
splits. In this case there is also the \textit{spherical twist} given by 
\[T_F:=\cone\left(FR\xrightarrow {\eps} \id\right)\,.\]
It is again an autoequivalence with $T_F^2=P_F$ (see \cite[p. 33]{Add}).  
\end{enumerate}
\end{example}
\begin{lemma}\label{relation}
\begin{enumerate}
 \item Let $\Psi\in \Aut(\A)$ such that $\Psi\circ H\simeq H\circ \Psi$. Then $F\circ \Psi$ is again a $\P^n$-functor with the property
\[P_{F\circ \Psi}\simeq P_F\,.\]
\item Let $\Phi\colon \B\to \mathcal C$ be an equivalence of triangulated categories. Then $\Phi\circ F$ is again a $\P^n$-functor with the property that 
\[P_{\Phi\circ F}\circ \Phi\simeq \Phi\circ P_F\,.\]
\end{enumerate}
\end{lemma}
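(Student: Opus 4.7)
The plan is to verify the three defining properties of a $\P^n$-functor by transporting those of $F$ through $\Psi$ or $\Phi$, and then to compute each twist directly from the cone-of-cone formula.

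First I set up the adjoints. Since $\Psi$ is an autoequivalence, $\Psi^{-1}$ is both left and right adjoint to $\Psi$, so the functor $F\circ\Psi$ has left and right adjoints $\Psi^{-1}\circ L$ and $\Psi^{-1}\circ R$ respectively. Similarly $\Phi\circ F$ has adjoints $L\circ\Phi^{-1}$ and $R\circ\Phi^{-1}$. In both cases the existence of adjoints for the new functor reduces to the existence of adjoints for $F$.

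Next I verify the three $\P^n$-functor axioms. For (i), in the first case we compute
\[(\Psi^{-1}R)(F\Psi)\simeq \Psi^{-1}(\id\oplus H\oplus\dots\oplus H^n)\Psi\simeq \id\oplus H\oplus\dots\oplus H^n\]
using the hypothesis $\Psi H\simeq H\Psi$; in the second case $(R\Phi^{-1})(\Phi F)\simeq RF$ immediately, so both new $\P^n$-functors have the same $H$ as $F$. For (ii), the relevant map is obtained from $HRF\to RF$ by whiskering with $\Psi^{-1}(\,\cdot\,)\Psi$ or with $(\,\cdot\,)\Phi^{-1}\Phi$, neither of which alters the matrix form since the units and counits of the $\Psi\dashv\Psi^{-1}$ and $\Phi\dashv\Phi^{-1}$ adjunctions are isomorphisms. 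For (iii), $\Psi^{-1}R\simeq \Psi^{-1}H^nL\simeq H^n\Psi^{-1}L$ by commutation, and $R\Phi^{-1}\simeq H^nL\Phi^{-1}$ trivially.

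Finally I compute the twists from the defining double cone. For (i), substituting into the formula and cancelling $\Psi\Psi^{-1}\simeq\id$ while using $\Psi H\Psi^{-1}\simeq H$ gives
\[P_{F\Psi}\simeq\cone\bigl(\cone(FHR\to FR)\to\id\bigr)\simeq P_F.\]
For (ii), the counit $\eps'\colon \Phi FR\Phi^{-1}\to\id_{\mathcal C}$ of $\Phi F\dashv R\Phi^{-1}$ is naturally identified with $\Phi\eps\Phi^{-1}$ composed with the counit of $\Phi\dashv\Phi^{-1}$, and similarly for the map $FHR\to FR$ after whiskering. Consequently
\[P_{\Phi F}\simeq \Phi\circ P_F\circ\Phi^{-1},\]
and composing with $\Phi$ on the right yields the claimed relation $P_{\Phi F}\circ\Phi\simeq\Phi\circ P_F$.

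The main point requiring care is not that the functors are abstractly isomorphic but that the specific natural transformations built from units and counits transport correctly; however, this is automatic from the naturality of adjunction units and counits, so the whole lemma is essentially a diagram-chase with no geometric content.
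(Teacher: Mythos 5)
Your argument is correct and is essentially the intended one: the paper gives no details here, deferring to the analogous statement for spherical functors in Anno's work, and that proof is exactly the conjugation-of-adjunction-data argument you carry out. The only point worth flagging is that cones of morphisms of functors are not well defined in a general triangulated setting, so the manipulation $P_{\Phi F}\simeq \Phi\circ P_F\circ \Phi^{-1}$ (and likewise for $F\circ\Psi$) should really be performed on the Fourier--Mukai kernels, where pre- and post-composition with an equivalence becomes convolution with its kernel; the paper's remark following the definition of the twist (citing \cite{AL}) is what licenses this, and your naturality argument goes through verbatim at that level.
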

\begin{proof}
The proof is analogous to the proof of the corresponding statement for spherical functors \cite[Proposition 2]{Ann}.
\end{proof}
\begin{cor}\label{pnobj}
Let $E_1,\dots,E_n\in \B$ be a collection of pairwise orthogonal (that means $\Hom^*(E_i,E_j)=0=\Hom^*(E_j,E_i)$ for $i\neq j$) $\P^n$-objects with associated twists $p_i:=P_{E_i}$. Then 
\[\Z^n\to \Aut(\A)\quad, \quad (\lambda_1,\dots,\lambda_n)\mapsto p_1^{\lambda_1}\circ\dots\circ p_n^{\lambda_n}\]
defines a group isomorphism $\Z^n\cong \langle p_1,\dots,p_n\rangle\subset \Aut(\B)$.  
\end{cor}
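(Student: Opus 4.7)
The plan is in two steps. First I would show that the twists $p_1,\dots,p_n$ pairwise commute, which makes the stated map a well-defined group homomorphism from the abelian group $\Z^n$ onto $\langle p_1,\dots,p_n\rangle\subset\Aut(\B)$. Second I would prove injectivity by evaluating composites on the individual objects $E_k$.

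For the commutation step, I would identify each $p_i$ with $P_{F_i}$, where $F_i\colon\D^b(\pt)\to\B$ sends $\C$ to $E_i$ as in Example \ref{ex}(i). Fix $i\neq j$. The orthogonality hypothesis places $E_j$ in $(\im F_i)^\perp=\ker R_i$, so Proposition \ref{twistaction}(i) gives $p_i(E_j)\simeq E_j$. Since a functor out of $\D^b(\pt)$ is determined up to isomorphism by the image of $\C$, this upgrades to an isomorphism of functors $p_i\circ F_j\simeq F_j$. Applying Lemma \ref{relation}(ii) with $\Phi=p_i$ and $F=F_j$ then yields
\[p_i\circ p_j \;=\; p_i\circ P_{F_j}\;\simeq\; P_{p_i\circ F_j}\circ p_i\;\simeq\; P_{F_j}\circ p_i \;=\; p_j\circ p_i.\]

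For the injectivity step, recall that the autoequivalence $H_k$ attached to the $\P^n$-object $E_k$ is the shift $[-2]$. Proposition \ref{twistaction}(ii) therefore specialises to $p_k(E_k)\simeq E_k[-2n]$, while $p_i(E_k)\simeq E_k$ for $i\neq k$ by the same reasoning as above. Iterating gives
\[(p_1^{\lambda_1}\circ\cdots\circ p_n^{\lambda_n})(E_k)\;\simeq\; E_k[-2n\lambda_k]\]
for every $k$. If the composite autoequivalence is the identity, then $E_k\simeq E_k[-2n\lambda_k]$ for each $k$, and this forces $\lambda_k=0$ because each $E_k$ is a nonzero bounded complex.

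I do not expect a serious obstacle. The one point requiring a moment's care is that Lemma \ref{relation}(ii) needs $p_i\circ F_j$ and $F_j$ to agree as $\P^n$-functors rather than merely as functors agreeing on $\C$, but this is automatic because the $\P^n$-structure on $F_j$ is controlled entirely by the $\P^n$-object $E_j$.
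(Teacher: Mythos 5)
Your proof is correct and follows essentially the same route as the paper: commutativity via Lemma \ref{relation}(ii) and injectivity by evaluating $p_1^{\lambda_1}\circ\dots\circ p_n^{\lambda_n}$ on each $E_k$ using Proposition \ref{twistaction}. You merely spell out the step the paper leaves implicit, namely that orthogonality puts $E_j$ in $\ker R_i$ so that $p_i\circ F_j\simeq F_j$, which is exactly what is needed to apply Lemma \ref{relation}(ii); the sign of the shift ($\pm 2n\lambda_k$) is immaterial to the conclusion.
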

\begin{proof}
By part (ii) of the previous lemma the $p_i$ commute which means that the map is indeed a group homomorphism onto 
the subgroup generated by the $p_i$. Let $g=p_1^{\lambda_1}\circ \dots \circ p_n^{\lambda_n}$. Then $g(E_i)=E_i[2n\lambda_i]$ by proposition \ref{twistaction}. Thus, $g=\id$ implies $\lambda_1=\dots=\lambda_n=0$.  
\end{proof}
\begin{lemma}
 Let $X$ be a smooth variety, $T\in \Aut(\D^b(X))$, and $A,B\in \D^b(X)$ objects such that $TA=A[i]$ and $TB=B[j]$ for some $i\neq j\in \Z$. Then $A\perp B$ and $B\perp A$.
\end{lemma}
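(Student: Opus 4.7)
The plan is to exploit that an autoequivalence is fully faithful, so applying $T$ to a morphism space is an isomorphism, and then use boundedness of $\Hom^*$ on a smooth variety to force vanishing.

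Concretely, for any $k\in\Z$ I would write
\[
\Hom^k(A,B)\;\cong\;\Hom^k(TA,TB)\;=\;\Hom^k(A[i],B[j])\;\cong\;\Hom^{k+j-i}(A,B),
\]
where the first isomorphism holds because $T$ is an autoequivalence. Setting $d:=j-i\neq 0$ and iterating gives
\[
\Hom^k(A,B)\;\cong\;\Hom^{k+md}(A,B)\qquad\text{for every }m\in\Z.
\]

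The key remaining input is that $\Hom^k(A,B)$ vanishes for $|k|$ sufficiently large. This follows from $X$ being smooth (so of finite global homological dimension $\dim X$) together with $A,B\in\D^b(X)$ having bounded coherent cohomology: a standard spectral sequence argument with $\sExt^p(\sHo^q(A),\sHo^r(B))$ shows that $\Hom^k(A,B)=0$ whenever $k$ lies outside an interval depending only on the cohomological amplitudes of $A$, $B$ and on $\dim X$. Hence for every fixed $k$ I can choose $m$ with $|k+md|$ large enough to force $\Hom^{k+md}(A,B)=0$, and the isomorphism above then yields $\Hom^k(A,B)=0$. The same argument with the roles of $A$ and $B$ exchanged (replacing $i,j$ by $-i,-j$ amounts to applying $T^{-1}$, which is again an autoequivalence) gives $\Hom^*(B,A)=0$.

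There is essentially no obstacle: the only subtlety is the boundedness of $\Hom^*$, which is where smoothness of $X$ enters. In the projective case this is classical; in the general smooth case one needs to invoke finite global dimension of $\Coh(X)$ to promote boundedness of the objects to boundedness of the total $\Ext$-groups. Once this is in place, the proof is a one-line cycling argument.
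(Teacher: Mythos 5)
Your argument is correct and is essentially the standard one: the paper itself only cites Addington (p.~11), and the proof there is exactly this cycling argument, using full faithfulness of $T$ to get $\Hom^k(A,B)\cong\Hom^{k+j-i}(A,B)$ and then the boundedness of $\Hom^*$ between bounded complexes on a smooth variety to force vanishing. No gaps; your remark that smoothness (finite homological dimension) is the input needed for boundedness of the total $\Ext$-groups is the right point to flag.
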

\begin{proof}
 See \cite[p. 11]{Add}.
\end{proof}
\begin{remark}\label{noshift}
 This shows together with proposition \ref{twistaction} that for a $\P^n$-functor $F$ with $H=[-\ell]$ for some $\ell\in \Z$ there does not exist a non zero-object $A$ with $T_F(A)=A[i]$ for any values of $i$ besides $0$ and $-n\ell+2$ because such an object would be orthogonal to the spanning class $\im F\cup \ker R$.
\end{remark}
\section{The diagonal embedding}\label{dia}
Let $X$ be a smooth projective surface over $\C$ and $2\le n\in \N$. We denote by 
$\delta\colon X\to X^n$ the diagonal embedding. We want to show that $F\colon \D^b(X)\to \D^b_{\sym_n}(X^n)$ given as the composition
\[\D^b(X)\xrightarrow{\triv} \D^b_{\sym_n}(X)\xrightarrow{\delta_*} \D^b_{\sym_n}(X^n)\]
of the functor which maps each object to itself equipped with the trivial action and the equivariant push-forward is a $\P^{n-1}$-functor.
Its right adjoint $R$ is given as the composition
\[\D^b_{\sym_n}(X^n)\xrightarrow{\delta^!} \D^b_{\sym_n}(X)\xrightarrow{[\_]^{\sym_n}} \D^b(X)\]
of the usual right adjoint (see \cite{Has} for equivariant Grothendieck duality) and the functor of taking invariants.
We consider the standard representation $\rho$ of $\sym_n$ as the quotient of the regular representation $\C^n$ by the one dimensional invariant subspace.
The normal bundle sequence 
\[0\to T_X\to T_{X^n\mid X}\to N\to 0\]
where $N:=N_\delta=N_{X/X^n}$ is of the form
\[0\to T_X\to T_X^{\oplus n}\to N\to 0\] 
where the map $T_X\to T_X^{\oplus n}$ is the diagonal embedding. When considering $T_{X^n|X}$ as a ${\sym_n}$-sheaf equipped with the natural linearisation it is given by $T_X\otimes \C^n$ where $\C^n$ is the regular representation. Thus, as a $\sym_n$-sheaf, the normal bundle equals $T_X\otimes \rho$. We also see that the normal bundle sequence splits using e.g.\ the splitting
\[T_X\otimes \C^n\to T_X\quad,\quad (v_1,\dots,v_n)\mapsto \frac 1n (v_1+\dots+v_n)\,.\]
\begin{theorem}[\cite{AC}]
 Let $\iota\colon Z\hookrightarrow M$ be a regular embedding of codimension $c$ such that the normal bundle sequence splits. Then there is an isomorphism 
\begin{align}\label{cald}\iota^*\iota_*(\_)\simeq (\_)\otimes(\bigoplus_{i=0}^c \wedge^i N_{Z/M}^\vee [i])\end{align}
of endofunctors of $\D^b(Z)$.
\end{theorem}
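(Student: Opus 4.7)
My plan is to reduce, via a projection-formula argument, to a single formality statement for $\iota^*\iota_*\reg_Z$, and then to bootstrap the splitting of the normal bundle sequence into a derived-categorical splitting of that complex. By the projection formula for closed embeddings, the endofunctor $\iota^*\iota_*$ of $\D^b(Z)$ is naturally equivalent to $(\_)\otimes^L\iota^*\iota_*\reg_Z$; hence the asserted isomorphism of functors reduces to exhibiting a single isomorphism
\[
\iota^*\iota_*\reg_Z \;\simeq\; \bigoplus_{i=0}^c\wedge^iN^\vee[i]
\]
in $\D^b(Z)$.

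To identify the cohomology sheaves of the left-hand side I would work locally. On an affine chart in which $Z\subset M$ is cut out by a regular sequence $f_1,\dots,f_c$, the Koszul complex $\wedge^\bullet\reg_M^{\oplus c}$ with differential contracting against $(f_1,\dots,f_c)$ is a locally free resolution of $\iota_*\reg_Z$. Pulling back along $\iota$ kills every $f_i$, so the pulled-back complex has vanishing differentials and
\[
\sHo^{-i}(\iota^*\iota_*\reg_Z)\;=\;\wedge^i(\I/\I^2)\;=\;\wedge^iN^\vee,
\]
where $\I$ denotes the ideal sheaf of $Z$ in $M$. This pins down the cohomology sheaves on both sides, but not yet the object in $\D^b(Z)$: a priori $\iota^*\iota_*\reg_Z$ could carry nontrivial Massey products obstructing formality.

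The whole point of the splitting hypothesis is to kill these obstructions. Dualising the normal bundle sequence produces a splitting of the conormal sequence $0\to N^\vee\to\iota^*\Omega^1_M\to\Omega^1_Z\to 0$, which is equivalent to giving a retraction $r\colon Z^{(1)}\to Z$ of the first infinitesimal neighborhood of $Z$ in $M$. From $r$ one extracts an isomorphism $\tau^{\ge -1}\iota^*\iota_*\reg_Z\simeq\reg_Z\oplus N^\vee[1]$, and in particular a map $N^\vee[1]\to\iota^*\iota_*\reg_Z$. Taking $i$-fold exterior products of this map, using the natural graded-commutative multiplicative structure on $\iota^*\iota_*\reg_Z$ inherited from the diagonal $Z\to Z\times_M Z$, produces candidate maps $\wedge^iN^\vee[i]\to\iota^*\iota_*\reg_Z$ for every $i$; summing them gives a comparison map $\bigoplus_i\wedge^iN^\vee[i]\to\iota^*\iota_*\reg_Z$ that is a quasi-isomorphism by the preceding cohomology computation.

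The main obstacle, and the real content of the theorem, is verifying that exterior products of the first-order splitting really furnish compatible higher-degree maps --- equivalently, that all higher $A_\infty$-obstructions to formality of the DG-algebra $\iota^*\iota_*\reg_Z$ are killed by a splitting of the normal bundle sequence. This is most cleanly handled by working with a DG-algebra model of the derived self-intersection $Z\times_M^L Z$ (for instance a Koszul DG-algebra lift), and exhibiting the given splitting as a quasi-isomorphism between that model and the formal model $\mathrm{Sym}_{\reg_Z}(N^\vee[1])$; this is precisely the computation carried out in \cite{AC}.
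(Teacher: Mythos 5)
First, note that the paper does not actually prove this statement: it is quoted from \cite{AC}, and the only argument supplied in the text is for the corollary obtained by tensoring with $\iota^!\reg_M$. So your sketch can only be measured against the argument of \cite{AC}, and in outline it does reproduce that argument: the reduction via the projection formula to the single object $\iota^*\iota_*\reg_Z$, the local Koszul computation $\sHo^{-i}(\iota^*\iota_*\reg_Z)\cong\wedge^iN^\vee_{Z/M}$, the translation of the splitting of the normal bundle sequence into a retraction $Z^{(1)}\to Z$ of the first infinitesimal neighbourhood (legitimate here because $Z$ and $M$ are smooth, so the two obstruction classes in $\Ext^1(\Omega_Z,N^\vee_{Z/M})$ agree), and the multiplicative bootstrap are exactly the right ingredients. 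The final verification is also as easy as you say: a map out of the sheaf $\wedge^iN^\vee_{Z/M}$ placed in degree $-i$ can only affect $\sHo^{-i}$, so the comparison map is ``upper triangular'' on cohomology with isomorphisms on the diagonal and is therefore a quasi-isomorphism once it exists.

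The one place where the sketch asserts more than it delivers is the phrase ``and in particular a map $N^\vee[1]\to\iota^*\iota_*\reg_Z$''. A splitting of $\tau^{\ge-1}\iota^*\iota_*\reg_Z$ only gives a section $N^\vee_{Z/M}[1]\to\tau^{\ge-1}\iota^*\iota_*\reg_Z$, and $\tau^{\ge-1}$ is a quotient of $\iota^*\iota_*\reg_Z$ in the truncation triangle, not a subobject. Lifting the section along $\iota^*\iota_*\reg_Z\to\tau^{\ge-1}\iota^*\iota_*\reg_Z$ is obstructed by a class in $\Hom\bigl(N^\vee_{Z/M},\tau^{\le-2}\iota^*\iota_*\reg_Z\bigr)$, whose associated graded pieces are subquotients of $\Ho^{q}\bigl(Z,N_{Z/M}\otimes\wedge^{q}N^\vee_{Z/M}\bigr)$ for $2\le q\le c$; these need not vanish for an arbitrary complex with the given cohomology sheaves. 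Producing the degree-one map into the full object --- or circumventing the lift entirely by working with a multiplicative (DG or coalgebra) model, which is what \cite{AC} actually do --- is therefore part of the hard content, not a formal consequence of the truncated splitting. You have correctly located the essential difficulty in the multiplicative extension step and correctly deferred it to \cite{AC}, but you slightly mislocate what comes for free: the existence of the generating map itself, not only the compatibility of its exterior powers, requires the finer argument. As written, your text is an accurate roadmap of the proof in \cite{AC} rather than a self-contained proof, which is a defensible standard given that the paper itself only cites the result.
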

\begin{cor}
Under the same assumptions, there is an isomorphism
\begin{align}\label{caldcor}\iota^!\iota_*(\_)\simeq (\_)\otimes(\bigoplus_{i=0}^c \wedge^i N_{Z/M} [-i])\end{align}
\end{cor}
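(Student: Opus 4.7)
The plan is to reduce the corollary to the preceding theorem by expressing $\iota^!$ in terms of $\iota^*$. First I would recall that, since $\iota$ is a closed regular embedding between smooth varieties (and in the equivariant setting Hashimoto's equivariant Grothendieck duality, already cited above, supplies the same statement), there is the standard identification
\[\iota^!(\_)\simeq \iota^*(\_)\otimes \det N_{Z/M}[-c].\]
This in turn follows from the computation $\iota^!\reg_M\simeq \det N_{Z/M}[-c]$ (a consequence of Serre duality combined with the adjunction formula $\omega_Z\simeq \omega_M|_Z\otimes \det N_{Z/M}$) together with the projection formula applied to $\iota$.

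Next I would apply this identity to $\iota_*(\_)$ and plug in the formula from the theorem, obtaining
\[\iota^!\iota_*(\_)\simeq \iota^*\iota_*(\_)\otimes \det N_{Z/M}[-c]\simeq (\_)\otimes \bigoplus_{i=0}^c \wedge^i N_{Z/M}^\vee\otimes \det N_{Z/M}[i-c].\]
The canonical isomorphism $\wedge^i N^\vee\otimes \det N\simeq \wedge^{c-i} N$, induced by the perfect wedge pairing $\wedge^i N^\vee\otimes \wedge^{c-i} N^\vee\to \det N^\vee$, rewrites each summand in terms of $N$ rather than $N^\vee$. A final reindexing $j:=c-i$ then produces exactly the claimed expression $(\_)\otimes\bigoplus_{j=0}^c \wedge^j N_{Z/M}[-j]$.

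I do not foresee any real obstacle: the whole argument is bookkeeping with Grothendieck duality for regular embeddings. The only point that merits care is the sign of the homological shift in $\iota^!$, which is pinned down unambiguously by the Serre duality identity $\iota^!\omega_M[\dim M]\simeq \omega_Z[\dim Z]$ and the codimension count, ensuring that the exponents $[i-c]$ and, after reindexing, $[-j]$ come out as written.
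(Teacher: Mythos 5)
Your proof is correct and is essentially the paper's own argument: the paper's one-line proof is precisely ``tensorise both sides of the theorem's formula by $\iota^!\reg_M\simeq \wedge^c N_{Z/M}[-c]$,'' and you have simply written out the intermediate steps (the identity $\iota^!\simeq \iota^*(\_)\otimes\det N_{Z/M}[-c]$, the duality $\wedge^i N^\vee\otimes\det N\simeq\wedge^{c-i}N$, and the reindexing). No issues.
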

\begin{proof}
Tensorise both sides of (\ref{cald}) by $\iota^!\reg_M\simeq \wedge^c N_{Z/M}[-c]$.
\end{proof}
In the case that $\iota=\delta$ from above this yields the isomorphism of functors
\begin{align}\label{before}\delta^!\delta_*(\_)\simeq (\_)\otimes (\bigoplus_{i=0}^{2(n-1)}\wedge^i(T_X\otimes \rho)[-i])\,.\end{align}
\begin{lemma}\label{monadmulti}
For $0\le i,j\le c$ with $i+j\le c$ the component \[(\_)\otimes \wedge^i N_{Z/M}\otimes \wedge^j N_{Z/M}\to (\_)\otimes \wedge^{i+j} N_{Z/M}\] 
of the monad multiplication $\iota^!\eps\iota_*\colon \iota^!\iota_*\iota^!\iota_*\to \iota^!\iota_*$ is given by the wedge pairing.
\end{lemma}
\begin{proof}
For $E\in \D^b(M)$ the object $\iota^! E$ can be identified with $\sHom(\iota_*\reg_Z,E)$ considered as an object in 
$\D^b(Z)$. Under this identification the counit map $\sHom(\iota_*\reg_Z,E)\to E$ is given by the evaluation $\phi\mapsto \phi(1)$ (see \cite[section III.6]{Har1}). 
Now we get for $F\in \D^b(Z)$ the identifications
\[\iota^!\iota_*\iota^!\iota_*F\simeq \sHom(\iota_*\reg_Z,\iota_*\reg_Z)\otimes_{\reg_Z} \sHom(\iota_*\reg_Z,\iota_*\reg_Z)\otimes_{\reg_Z} F \]
and $\iota^!\iota_* F\simeq \sHom(\iota_*\reg_Z,\iota_*\reg_Z)\otimes_{\reg_Z} F$. Under these identifications the component
\[\sExt^i(\iota_*\reg_Z,\iota_*\reg_Z)\otimes_{\reg_Z} \sExt^j(\iota_*\reg_Z,\iota_*\reg_Z)\otimes_{\reg_Z}F\to \sExt^{i+j}(\iota_*\reg_Z,\iota_*\reg_Z)\otimes_{\reg_Z}F\]
of the monad multiplication equals the Yoneda product. The Yoneda product corresponds to the wedge product under the isomorphism $\sExt^i(\reg_Z,\reg_Z)\cong \wedge^iN_{Z/M}$ (see \cite[p. 442]{Has}).
\end{proof}
\begin{lemma}[{\cite[Appendix B]{Sca1}}]\label{Scalemma}
Let $V$ be a two-dimensional vector space with a basis consisting of vectors $u$ and $v$. Then the space of invariants $[\wedge^i(V\otimes \rho)]^{\sym_n}$ is one-dimensional if $0\le i \le 2(n-1)$ is even and zero if it is odd. In the  
even case $i=2\ell$ the space of invariants is spanned by the image of the vector $\omega^\ell$, where 
\[\omega=\sum_{i=1}^k ue_i\wedge ve_i \in \wedge^2(V\otimes \C^n)\,,\]
under the projection induced by the projection $\C^n\to \rho$.
\end{lemma}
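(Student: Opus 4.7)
I would split the argument into a representation-theoretic dimension count and an explicit check that $\omega^\ell$ gives a non-zero invariant. For the dimension, the basis $\{u,v\}$ of $V$ induces an isomorphism $V\otimes \rho \simeq \rho\oplus \rho$ of $\sym_n$-representations, so
\[\wedge^i(V\otimes \rho) \;\simeq\; \bigoplus_{a+b=i}\wedge^a\rho \otimes \wedge^b\rho.\]
The essential input is the classical identification, for $0\le a\le n-1$, of $\wedge^a\rho$ with the irreducible Specht module of hook shape $(n-a,1^a)$; these are pairwise non-isomorphic, and since every irreducible $\sym_n$-module is self-dual, Schur's lemma gives
\[\dim[\wedge^a\rho \otimes \wedge^b\rho]^{\sym_n} \;=\; \dim\Hom_{\sym_n}(\wedge^a\rho,\wedge^b\rho) \;=\; \delta_{ab}.\]
Summing shows the invariant space is one-dimensional for even $i=2\ell$ with $0\le \ell \le n-1$ and zero otherwise.

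For the explicit generator, $\omega$ is manifestly $\sym_n$-invariant, hence so is its image under the induced surjection $q\colon \wedge^{2\ell}(V\otimes \C^n)\twoheadrightarrow \wedge^{2\ell}(V\otimes \rho)$; it therefore suffices to prove $q(\omega^\ell)\neq 0$, as then the first step forces it to span. Fix the splitting $\C^n = \C\mathbf{1}\oplus \rho^\perp$ with $\mathbf{1} := e_1+\cdots+e_n$ and $\rho^\perp := \{\sum c_ie_i : \sum c_i = 0\}$, and set $\tilde{e}_i := e_i - \tfrac{1}{n}\mathbf{1}$. Using $\sum_i \tilde{e}_i = 0$, a short computation gives
\[\omega \;=\; \alpha + \tilde\omega,\qquad \alpha := \tfrac{1}{n}(u\mathbf{1})\wedge(v\mathbf{1}) \in \wedge^2(V\otimes \C\mathbf{1}),\qquad \tilde\omega := \sum_i u\tilde{e}_i \wedge v\tilde{e}_i \in \wedge^2(V\otimes \rho^\perp).\]
Since $q$ kills $\alpha$ and restricts to an isomorphism $V\otimes \rho^\perp \xrightarrow{\sim} V\otimes \rho$, and since $\alpha^2=0$, expanding $(\alpha+\tilde\omega)^\ell$ and applying $q$ yields $q(\omega^\ell) = q(\tilde\omega^\ell)$, which is non-zero iff $\tilde\omega^\ell \neq 0$.

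A direct computation shows $\omega^n = n!\, ue_1\wedge ve_1\wedge\cdots\wedge ue_n\wedge ve_n$, a non-zero top form in $\wedge^{2n}(V\otimes \C^n)$. Expanding $(\alpha+\tilde\omega)^n$ with $\alpha^2=0$ and $\tilde\omega^n\in \wedge^{2n}(V\otimes \rho^\perp)=0$ forces
\[0 \;\ne\; \omega^n \;=\; n\,\alpha\,\tilde\omega^{n-1},\]
so $\tilde\omega^{n-1}\neq 0$, and multiplying by $\tilde\omega^{n-1-\ell}$ then gives $\tilde\omega^\ell \neq 0$ for every $0\le \ell \le n-1$. The only nontrivial ingredient in this plan is the Specht-module identification $\wedge^a\rho \simeq S^{(n-a,1^a)}$; a more self-contained but less transparent alternative would be to compute $\sum_i t^i\dim[\wedge^i(V\otimes\rho)]^{\sym_n}$ via Molien's formula $\tfrac{1}{n!}\sum_\sigma \det(1+t\sigma|_{V\otimes\rho})$ and recognise the resulting polynomial by a combinatorial argument over cycle types.
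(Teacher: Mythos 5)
Your argument is correct, and it is complete modulo one classical fact. The paper itself gives no proof of this lemma at all --- it is imported wholesale from Scala's thesis (the cited Appendix B), so there is nothing in the text to compare against line by line; what you have written is a legitimate self-contained substitute. Both halves check out: the decomposition $\wedge^i(V\otimes\rho)\simeq\bigoplus_{a+b=i}\wedge^a\rho\otimes\wedge^b\rho$ is valid because $V$ carries the trivial $\sym_n$-action, the identification $\wedge^a\rho\simeq S^{(n-a,1^a)}$ with pairwise non-isomorphic self-dual irreducibles is standard (e.g.\ Fulton--Harris, Exercise 4.6), and Schur's lemma then gives exactly the parity-and-range statement, including the vanishing of $\wedge^a\rho$ for $a>n-1$. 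The non-vanishing argument is the nicest part: the splitting $\omega=\alpha+\tilde\omega$ with $\alpha^2=0$, combined with $\omega^n=n!\,ue_1\wedge ve_1\wedge\dots\wedge ue_n\wedge ve_n\neq 0$ and $\tilde\omega^{\,n}=0$ for degree reasons, forces $\tilde\omega^{\,n-1}\neq 0$ and hence $\tilde\omega^{\,\ell}\neq 0$ for all $\ell\le n-1$; this is clean and avoids any character computation. Two small remarks: the statement of the lemma has a typo ($\sum_{i=1}^k$ should read $\sum_{i=1}^n$), which you silently and correctly repair; and if you want to avoid citing the hook-shape irreducibility, your Molien-series alternative would work but is genuinely messier, so the Specht-module route is the right trade-off.
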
 
\begin{cor}\label{wedgeinva}
For a vector bundle $E$ on $X$ of rank two and $0\le \ell\le n-1$ there is an isomorphism
\[ [\wedge^{2\ell}(E\otimes \rho)]^{\sym_n}\cong (\wedge^2 E)^{\otimes \ell}\,.\]
\end{cor}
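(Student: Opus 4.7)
The plan is to construct a natural map
\[
\Phi\colon (\wedge^2 E)^{\otimes\ell}\longrightarrow \bigl[\wedge^{2\ell}(E\otimes\rho)\bigr]^{\sym_n}
\]
and then verify, fibrewise via Lemma \ref{Scalemma}, that $\Phi$ is an isomorphism of line bundles.

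For the construction, fix the $\sym_n$-invariant vector $\xi\in\mathrm{Sym}^2\rho$ obtained as the image of $\sum_{i=1}^n e_i\cdot e_i$ under the surjection $\mathrm{Sym}^2\C^n\twoheadrightarrow \mathrm{Sym}^2\rho$ induced by $\C^n\to\rho$. Combined with the canonical direct-summand inclusion $\wedge^2 E\otimes\mathrm{Sym}^2\rho\hookrightarrow\wedge^2(E\otimes\rho)$ coming from the standard decomposition $\wedge^2(V\otimes W)\cong(\mathrm{Sym}^2 V\otimes\wedge^2 W)\oplus(\wedge^2 V\otimes\mathrm{Sym}^2 W)$, this yields an $\sym_n$-equivariant bundle map $\wedge^2 E\to\wedge^2(E\otimes\rho)$, $\eta\mapsto\eta\otimes\xi$, with $\sym_n$ acting trivially on $\wedge^2 E$; its image lies in the invariant subsheaf. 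Taking the $\ell$-fold wedge product in $\wedge^\bullet(E\otimes\rho)$ produces $\Phi$, which on sections sends $\eta^{\otimes\ell}\mapsto(\eta\otimes\xi)^{\wedge\ell}$.

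To check $\Phi$ is an isomorphism, observe that both source and target are line bundles: the source by construction, and the target because it is a direct summand of the locally free sheaf $\wedge^{2\ell}(E\otimes\rho)$ (we are in characteristic $0$ with a finite group action) whose fibres are one-dimensional by Lemma \ref{Scalemma}. It therefore suffices to verify that $\Phi$ is nonzero on each fibre. In a local trivialization of $E$ by sections $u,v$, unfolding the inclusion $\wedge^2 E\otimes\mathrm{Sym}^2\rho\hookrightarrow\wedge^2(E\otimes\rho)$ identifies $(u\wedge v)\otimes\xi$ with the image of $\omega=\sum_i ue_i\wedge ve_i$ in $\wedge^2(E\otimes\rho)$; consequently $\Phi((u\wedge v)^{\otimes\ell})$ is a nonzero scalar multiple of the image of $\omega^\ell$, which generates the invariant subspace by Lemma \ref{Scalemma}. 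Hence $\Phi$ is nowhere vanishing and thus an isomorphism. The only bookkeeping required is the compatibility between $\wedge$-multiplication in $\wedge^\bullet(E\otimes\rho)$ and the summand decomposition used to define the embedding $\wedge^2 E\otimes\mathrm{Sym}^2\rho\hookrightarrow\wedge^2(E\otimes\rho)$, which gives the identification $(\eta\otimes\xi)^{\wedge\ell}\leftrightarrow\omega^\ell$ in the trivialization; this is routine.
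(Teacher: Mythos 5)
Your proof is correct and follows essentially the same route as the paper: both construct an explicit global map from $(\wedge^2 E)^{\otimes\ell}$ into $\wedge^{2\ell}(E\otimes\rho)$ whose value on a local generator $(u\wedge v)^{\otimes\ell}$ is a nonzero multiple of the image of $\omega^{\ell}$, and then invoke Lemma \ref{Scalemma} for the fibrewise identification of the invariants. Your packaging via the invariant $\xi\in\mathrm{Sym}^2\rho$ and the decomposition of $\wedge^2(E\otimes\rho)$ is a cosmetic variant of the paper's sum over multi-indices, and you in fact spell out the isomorphism check that the paper leaves implicit.
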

\begin{proof}
 The isomorphism is given by composing the morphism
\[(\wedge^2E)^{\otimes \ell}\to \wedge^\ell(E\otimes \C^n)\quad,\quad x_1\otimes \dots\otimes x_\ell\mapsto \sum_{1\le i_1<\dots<i_\ell\le n} x_1 e_{i_1}\wedge\dots\wedge x_\ell e_{i_{\ell}}\]
with the projection induced by the projection $\C^n\to \rho$.
\end{proof}
We set $H:=\wedge^2 T_X[-2]=\omega_X^\vee[-2]=S_X^{-1}$ as the inverse of the Serre functor on $X$. 
\begin{cor}
There is the isomorphism of functors
\[RF\simeq \id\oplus H\oplus H^2\oplus\dots\oplus H^{n-1}\,.\] 
\end{cor}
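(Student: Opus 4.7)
The strategy is to unfold the definitions and chain together the results already established in this section, starting from the monad isomorphism (\ref{before}) and then passing to $\sym_n$-invariants.

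First I would recall that by construction $R = [\_]^{\sym_n}\circ \delta^!$ and $F = \delta_*\circ \triv$, so
\[RF(\_) \simeq [\delta^!\delta_*\triv(\_)]^{\sym_n}.\]
Plugging in (\ref{before}) and using that the $\sym_n$-action on $\triv(\_)$ is trivial (so tensoring with it commutes with taking invariants), I obtain
\[RF(\_) \simeq (\_)\otimes \Bigl[\bigoplus_{i=0}^{2(n-1)}\wedge^i(T_X\otimes \rho)[-i]\Bigr]^{\sym_n}.\]

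Next I would compute the invariants summand by summand. Since $X$ is a surface, $T_X$ is a rank-two vector bundle, so Lemma \ref{Scalemma} and its coherent-sheaf incarnation Corollary \ref{wedgeinva} apply: the $\sym_n$-invariants of $\wedge^i(T_X\otimes \rho)$ vanish for odd $i$, and for $i=2\ell$ with $0\le \ell\le n-1$ they are canonically identified with $(\wedge^2 T_X)^{\otimes \ell}$. Thus only the even indices survive and
\[\bigl[\bigoplus_{i=0}^{2(n-1)}\wedge^i(T_X\otimes \rho)[-i]\bigr]^{\sym_n} \simeq \bigoplus_{\ell=0}^{n-1} (\wedge^2 T_X)^{\otimes \ell}[-2\ell].\]

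Finally, the functor $H=\wedge^2 T_X[-2]$ iterated $\ell$ times is precisely tensor product with $(\wedge^2 T_X)^{\otimes \ell}[-2\ell]$, so the right-hand side is the functorial direct sum $\id \oplus H\oplus H^2\oplus \cdots \oplus H^{n-1}$, which gives the desired isomorphism. The only mild point to be careful about is the interchange of $\sym_n$-invariants with the shifts and the direct sum decomposition, and the observation that Corollary \ref{wedgeinva} indeed globalises from vector spaces to the rank-two vector bundle $T_X$; neither is a serious obstacle since the $\sym_n$-action acts purely on the $\rho$-factor and the construction in Corollary \ref{wedgeinva} is functorial in $E$.
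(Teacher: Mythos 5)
Your argument is correct and is exactly the paper's proof, which simply cites formula (\ref{before}) together with Corollary \ref{wedgeinva}; you have just spelled out the intermediate steps (passing to invariants, the vanishing in odd degrees from Lemma \ref{Scalemma}, and the identification of $(\wedge^2 T_X)^{\otimes\ell}[-2\ell]$ with $H^\ell$). No issues.
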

\begin{proof}
This follows by formula (\ref{before}) and corollary \ref{wedgeinva}.
\end{proof}

\begin{lemma}
The map $HRF\to RF$ defined in condition (ii) for $\P$-functors is for this pair $F\rightleftharpoons R$ given by a matrix of the form (\ref{matrix}).
\end{lemma}
\begin{proof}
The generators $\omega^\ell$ from lemma \ref{Scalemma} are mapped to each other by wedge product. By lemma \ref{monadmulti} the components $H\circ H^k\to H^{k+1}$ for $k=0,\dots,n-2$ of the monad multiplication are given by wedge product. Hence, they are isomorphisms.
\end{proof}
\begin{lemma}
There is the isomorphism $S_{X^n} F H^{n-1}\simeq F S_X$. 
\end{lemma}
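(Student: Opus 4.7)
The plan is to verify the isomorphism by computing both sides explicitly on an arbitrary object $A\in\D^b(X)$. Since $X$ is a smooth projective surface, $S_X(-) \simeq (-)\otimes\omega_X[2]$, while the equivariant Serre functor on $\D^b_{\sym_n}(X^n)$ is
\[S_{X^n}(-)\simeq(-)\otimes\omega_X^{\boxtimes n}[2n],\]
with $\omega_X^{\boxtimes n}$ carrying the natural $\sym_n$-linearisation coming from permutation of the tensor factors; this is the standard formula for the Serre functor on the equivariant derived category of a smooth projective variety carrying a finite group action. Since $H^{n-1}\simeq (-)\otimes\omega_X^{-(n-1)}[-2(n-1)]$, the shifts $[-2(n-1)]$ and $[2n]$ combine to $[2]$, so that
\[S_{X^n}FH^{n-1}A \;\simeq\; \delta_*\triv\bigl(A\otimes\omega_X^{-(n-1)}\bigr)\otimes\omega_X^{\boxtimes n}[2].\]

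Next I would apply the equivariant projection formula, which is legitimate because the diagonal $\delta\colon X\to X^n$ is $\sym_n$-equivariant when $X$ carries the trivial action, to rewrite this as
\[\delta_*\Bigl(\triv(A\otimes\omega_X^{-(n-1)})\otimes\delta^*(\omega_X^{\boxtimes n})\Bigr)[2].\]
The crucial step is then to identify $\delta^*(\omega_X^{\boxtimes n})$ with $\omega_X^n$ equipped with the \emph{trivial} $\sym_n$-linearisation. As an underlying sheaf this is immediate from $\delta^*(L_1\boxtimes\dots\boxtimes L_n) = L_1\otimes\dots\otimes L_n$; the linearisation is trivial because the $\sym_n$-action on $\omega_X^{\boxtimes n}$ permutes factors which, after restriction to the diagonal, all become the same line bundle, and permutations of identical invertible sheaves act as the identity (no signs arise since we are symmetrising line bundles, not vector spaces of odd rank, in contrast with the situation in Lemma \ref{Scalemma}).

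Granting this identification, the inner tensor product collapses to the trivially linearised object $A\otimes\omega_X^{-(n-1)+n} = A\otimes\omega_X$, so that
\[S_{X^n}FH^{n-1}A \;\simeq\; \delta_*\triv(A\otimes\omega_X[2]) \;=\; F S_X A,\]
functorially in $A$. The main point requiring care is the linearisation check in the middle step: one must confirm that the canonical isomorphism of sheaves intertwines the permutation action with the trivial one. Everything else reduces to the projection formula and routine bookkeeping with shifts and tensor powers of $\omega_X$.
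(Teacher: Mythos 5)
Your proof is correct and follows essentially the same route as the paper: write out both Serre functors, use $\omega_{X^n}\cong\omega_X^{\boxtimes n}$, and apply the projection formula along $\delta$. The only difference is that you make explicit the (correct) observation that $\delta^*\omega_X^{\boxtimes n}\cong\omega_X^{\otimes n}$ carries the trivial linearisation, which the paper leaves implicit in its use of the projection formula.
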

\begin{proof}
 For $\E\in \D^b(X)$ there are natural isomorphisms
\begin{align*}
 S_{X^n}F H^{n-1}(\E)= \omega_{X^n}[2n]\otimes \delta_*(\E\otimes \omega_X^{-(n-1)}[-2(n-1)])&\simeq
\omega_{X}^{\boxtimes n}\otimes \delta_*(\E\otimes \omega_X^{-(n-1)})[2]\\
&\overset{\text{PF}}\simeq \delta_*(\E\otimes \omega_X [2])=FS_X(\E)\,.
\end{align*}
\end{proof}
All this together shows theorem \ref{main}, i.e.\ that $F=\delta_*\circ \triv$ is indeed a $\P^{n-1}$-functor.
\section{Composition with the Bridgeland--King--Reid--Haiman equivalence}\label{BKR}
The \textit{isospectral Hilbert scheme} $I^nX\subset X^{[n]}\times X^n$ is defined as the reduced fibre product
$I^nX:=\left( X^{[n]}\times_{S^nX} X^n\right)_{\text{red}}$ with the defining morphisms being the Hilbert--Chow morphism $\mu\colon X^{[n]}\to S^nX$ and the quotient morphism $\pi\colon X^n\to S^nX$.
Thus, there is the commutative diagram
\[
\begin{CD}
I^nX
@>{p}>>
X^n \\
@V{q}VV
@VV{\pi}V \\
X^{[n]}
@>>{\mu}>
S^nX\,.
\end{CD} 
\] 
 The \textit{Bridgeland--King--Reid--Haiman equivalence} is the functor
\[\Phi:=\FM_{\reg_{I^nX}}\circ \triv=p_*\circ q^*\circ \triv\colon \D^b(X^{[n]})\longrightarrow \D^b_{\sym_n}(X^n)\,.\]
By the results in \cite{BKR} and \cite{Hai} it is indeed an equivalence. The isospectral Hilbert scheme can be identified with the blow-up of $X^n$ along the union of all the pairwise diagonals $\Delta_{ij}=\{(x_1,\dots,x_n)\in X^n\mid x_i=x_j\}$ (see \cite{Hai}).
By lemma \ref{relation} the functor composition $\Phi^{-1}\circ F\colon \D^b(X)\to\D^b(X^{[n]})$ is again a $\P^{n-1}$-functor  and thus yields an autoequivalence of the derived category of the Hilbert scheme. We consider in this section the case $n=2$.
Then $I^2X$ equals the blow-up of $X^2$ in the diagonal $\Delta$. In particular, $I^2X$ is smooth. We denote the exceptional divisor by $E$ and the inclusion of $I^2X$ into $X^{[2]}\times X^2$ by $j$.
Let $\tau=(1\,\,2)$ denote the non-trivial element of $\sym_2$.
\begin{lemma}
The functor $\Phi^{-1}\colon \D^b_{\sym_2}(X^2)$ is given by the composition $[\_]^{\sym_2}\circ \FM_{\mathcal Q}$ with 
\[\mathcal Q=j_*\reg(E)\,\in \D^b_{\sym_2}(X^2\times X^{[2]})\,.\]
The $\sym_2$-linearisation of $\mathcal Q$ restricts on $E$ to $\tau$ acting by $-1$ on $\reg_E(E)$.   
\end{lemma}
\begin{proof}
By the general formula for the right-adjoint of a Fourier--Mukai transform (see e.g.\ \cite[Proposition 5.9]{Huy}), we have
$\mathcal Q=\reg_{I^2X}^\vee\otimes q^*\omega_{X^{[2]}}[4]$. 
The canonical bundle of the blow-up is given by
$\omega_{I^2X}\cong p^*\omega_{X^2}\otimes \reg(E)$. Let $N$ be the normal bundle of the codimension 4 regular embedding $I^2X\to X^{[2]}\times X^2$. By adjunction formula
\[\wedge^4N\cong\omega^\vee_{X^{[2]}\times X^2\mid I^2X}\otimes \omega_{I^2X}\cong q^*\omega^\vee_{X^{[2]}}\otimes \reg(E)\,.\]
It follows by Grothendieck-Verdier duality for regular embeddings that
\[\mathcal Q=\reg_{I^2X}^\vee\otimes q^* \omega_{X^{[2]}}[4]\simeq \wedge^4 N[-4]\otimes q^*\omega_{X^{[2]}}[4]\simeq \reg(E)\,.\]
Let us assume that $\tau$ acts trivially on $\mathcal Q_{|E}=\reg_E(E)$. Then by \cite[Theorem 2.3]{DN} the sheaf of invariants 
$q_*\mathcal \mathcal Q^{\sym_2}$ is the descent of $\mathcal Q$, i.e.\ $q^*q_* \mathcal Q^{\sym_2}=\mathcal Q$. We also have $\Phi^{-1}(\reg_{X^n})= q_* \mathcal Q^{\sym_2}$ and by \cite[Proposition 1.3.3]{Sca1} $\Phi^{-1}(\reg_{X^n})=\reg_{X^{[n]}}$. All this together implies that $\reg(E)$ is the trivial line bundle which is a contradiction.
\end{proof}
If the surface $X$ has trivial canonical bundle, it is known that any line bundle $L$ on the boundary $\partial X^{[2]}=X^{[2]}_{\Delta}$ of the 
Hilbert scheme of two points on $X$ is an EZ-spherical object (see \cite[examples 8.49 (iv)]{Huy}).
That means that the functor
\[\tilde F_L\colon \D^b(X)\to \D^b(X^{[2]})\quad ,\quad \E\mapsto j_*(L\otimes \mu_\Delta^*E)\]
is a spherical functor where the maps $j$ and $\mu_\Delta$ come from the fibre diagram
\[
\begin{CD}
X^{[2]}_{\Delta}
@>{j}>>
X^{[2]} \\
@V{\mu_\Delta}VV
@VV{\mu}V \\
X
@>>{d}>
S^2X
\end{CD} 
\] 
with $d$ being the diagonal embedding. The map $\mu_\Delta\colon X^{[2]}_\Delta\to X$ equals the $\P^1$-bundle 
$\P(\Omega_X)\to X$. 
We need the following slight generalisation of \cite[Proposition 11.12]{Huy} for a blow-up
\[
\begin{CD}
E
@>{i}>>
\tilde X \\
@V{\pi}VV
@VV{p}V \\
Y
@>>{j}>
X
\end{CD} 
\]
of a smooth projective variety $X$ along a smooth subvariety $Y$ of codimension $c$. 
\begin{lemma}
For every $\F\in \Coh(Y)$ and every $k\in \Z$ there is an isomorphism
\[\mathcal H^k(p^*j_*\F)\cong i_*\left( \pi^* \F\otimes \wedge^{-k}\Omega_\pi\otimes \reg_\pi(-k)\right)\,.\]
\end{lemma}
\begin{proof}
This can be proven locally. Hence, we may assume that $Y=Z(s)$ is the zero locus of a global section of a vector bundle $\E$ of rank $c$.
Thus, the blow-up diagram can be enlarged to
\[
\begin{CD}
E
@>{i}>>
\tilde X @>{\iota}>> \P(\E)\\
@V{\pi}VV
@VV{p}V @VV{g}V\\
Y
@>>{j}>
X @>>{\id}> X
\end{CD} 
\]
where $\iota$ is a closed embedding of codimension $c-1$ such that the normal bundle $M$ has the property $\wedge^k M^\vee_{|E}=\wedge^{-k}\Omega_\pi\otimes \reg_\pi(-k)$ (see \cite[p. 252]{Huy}). The outer square is a flat base change. It follows that
\begin{align}\label{eq}
\iota_*p^*j_*\F\simeq \iota_*\iota^*g^*j_*\F\simeq \iota_*\iota^*\iota_*i_*\pi^*\F\simeq \iota_*i_*(\pi^*\F\otimes i^*\iota^*\iota_*\reg_{\tilde X}) 
\end{align}
where the last isomorphism is given by applying the projection formula two times. Now $\mathcal H^k(\iota^*\iota_* \reg_{\tilde X})\cong \wedge^{-k} M^\vee$ is locally free for every $k\in \Z$. It follows that
\[\mathcal H^k(\pi^* \F\otimes i^*\iota^*\iota_*\reg_{\tilde X})\cong \pi^* \F\otimes i^*\mathcal H^k(\iota^*\iota_*\reg_{\tilde X})\cong \pi^*\F\otimes \wedge^k M^\vee_{|E} \cong  \pi^*\F\otimes \wedge^{-k}\Omega_\pi\otimes \reg_\pi(-k)\,.\] 
By (\ref{eq}) also 
\[\iota_*\mathcal H^k(p^*j_*\F)\cong \mathcal H^k(\iota_* p^*j_*\F)\cong \mathcal H^k\left(\iota_*i_*(\pi^* \F\otimes i^*\iota^*\iota_*\reg_{\tilde X})\right)\cong \mathcal \iota_*i_*\mathcal H^k(\pi^* \F\otimes i^*\iota^*\iota_*\reg_{\tilde X})
\]
which proves the assertion since $\iota_*\colon \Coh(\tilde X)\to \Coh(\P(\E))$ is fully faithful.
\end{proof}
\begin{remark}
If $X$ carries an action by a finite group $G$ and $Y$ is invariant under this action, $G$ also acts on the blow-up $\tilde X$.
The bundle $M_{|E}$ of the proof is a quotient of the normal bundle $N_{E/\P(\E)}\cong \pi^* N_{Y/X}$. 
In the case that there is a group action this quotient is $G$-equivariant. Thus, the formula of the lemma is in this case also true for $\F\in \Coh_G(X)$ with the action on the right hand side induced by the linearisation of the wedge powers of $M$ respectively $N_{Y/X}$. 
\end{remark}
\begin{prop}\label{underbkr}
Let $X$ be a smooth projective surface (with arbitrary canonical bundle). Then there is an isomorphism of 
functors $\Phi^{-1}\circ F\cong \tilde F_{\reg_{\mu_{\Delta}}(-2)}[1]$, where $F=\delta_*\circ \triv$.  
\end{prop}
\begin{proof}
The $\P^1$-bundles $p_{\Delta}\colon E\to X$ and $\mu_{\Delta}\colon X^{[2]}_{\Delta}\to X$ are isomorphic via $q\colon I^2X\to X^{[2]}$. 
The centre $\Delta$ of the blow-up $p\colon I^2X\to X^2$  has codimension 2 in $X^2$. Thus, $p^*\delta_* \F$ 
is cohomologically concentrated in degree $0$ and $-1$. Since $p^*\delta_* \F$ is concentrated on $E$ where $\sym_2$ is acting trivially, one can take the invariants even before applying the push-forward along $q\colon I^2X\to X^{[2]}$. The group $\sym_2$ acts on $\wedge^0 N_{\Delta/X^2}$ trivially and on $N_{\Delta/X^2}$ alternating. By the previous lemmas and the remark it follows that there is a natural isomorphism $\Phi^{-1}\circ F(A)\simeq \tilde F_{\reg_{\mu_{\Delta}}(-2)}(A)[1]$ for $A\in \Coh(X)$.
Using the spectral sequence associated to the pull-back $q^*$ yields the isomorphism of functors.
\end{proof} 
\begin{remark} The proposition says in particular that $\tilde F_{\reg_{\mu_{\Delta}}(-2)}$ is also a spherical functor in the case that $\omega_X$ is not trivial. One can also prove this directly and for general $L$ instead of $\reg_{\mu_{\Delta}}(-2)$.
\end{remark}
\begin{remark}
Let $\hat F:=M_{\alt}\circ F\colon \D^b(X)\to \D^b(X^2)$ where $M_\alt$ is the tensor product with the alternating representation $\alt$. It is again a spherical functor by lemma \ref{relation}. Following the proof of proposition \ref{underbkr} one gets that $\Phi^{-1}\circ \hat F\cong \tilde F_{\reg_{\mu_{\Delta}}(-1)}$.  
\end{remark}
\begin{remark}\label{indy}
Since $X^{[2]}_\Delta$ equals the $\P^1$-bundle $\P(\Omega_X)$ over $X$, its canonical bundle is given by $\omega_{X^{[2]}_{\Delta}}=\mu_\Delta^*\omega_X^2\otimes\reg_{\mu_{\Delta}}(-2)$ (see e.g.\ \cite[B 5.8]{Ful}). The Hilbert-Chow morphism $\mu$ is a crepant resolution, i.e.\ $\omega_{X^{[2]}}\cong \mu^* \omega_{S^2X}$. Thus,
\[\omega_{X^{[2]}|X^{[2]}_\Delta}\cong \mu_\Delta^*(\omega_{S^2X\mid \Delta})\cong \mu_\Delta^*\omega_X^2\,.\]   
It follows by adjunction formula that $\reg_{X^{[n]}_\Delta}(X^{[n]}_\Delta)=\reg_{\mu_\Delta}(-2)$. The line bundle $\reg_{X^{[2]}}(-X^{[2]}_\Delta)$ has a square root $D:=\reg(-X^{[2]}_\Delta/2)\in \Pic X^{[2]}$ (see \cite[lemma 3.8]{Leh}). Its restriction to $X^{[2]}_\Delta$ is of the form $D_{|X^{[2]}_\Delta}=\reg_{\mu_\Delta}(1)$. Using this, we can rewrite for a general line bundle $L=\mu_\Delta^* K\otimes \reg_{\mu_\Delta}(i)\in \Pic X^{[2]}_\Delta$ the spherical functor $\tilde F_L$ as $\tilde F_L=M_D^{i+2}\circ \tilde F_{\reg_{\mu_\Delta(-2)}}\circ M_K$ where $M_K$ denotes the autoequivalence given by tensor product with $K$.
The analogue of lemma \ref{relation} for spherical functors thus yields $t_L=M_D^{i+2}\circ t_{\reg_{\mu_\Delta}(-2)}\circ M_D^{-(i+2)}$ where $t_L$ is the spherical twist associated to $\tilde F_L$.    
\end{remark}
\begin{remark}\label{supp}
For general $n\ge 2$ every object in the image of $\Phi^{-1}\circ F$ still is supported on $X^{[n]}_\Delta=\mu^{-1}(\Delta)$. 
\end{remark}
\section{Comparison with other autoequivalences}\label{comp}
In the following we will denote the $\P^n$-twist associated to $F$ respectively $\Phi^{-1}\circ F$ by $b\in\Aut(\D^b_{\sym_n}(X^{n}))\cong \Aut(\D^b(X^{[n]}))$. In the case that $n=2$ the functor $F$ is spherical (see example \ref{ex} (ii)). We denote the associated spherical twist by $\sqrt b$.
\begin{prop}
The automorphism $b$ is not contained in the group of standard automorphisms
\[\Aut(\D^b(X^{[n]}))\supset \DAut_{st}(X^{[n]})\cong \Z\times \left(\Aut(X^{[n]})\ltimes \Pic(X^{[n]})\right)\]
generated by shifts, push-forwards along automorphisms and taking tensor products by line bundles. The same holds in the case $n=2$ for $\sqrt b$.  
\end{prop}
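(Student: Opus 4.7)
The plan is to show that any standard autoequivalence of $\D^b(X^{[n]})$ fixing the skyscrapers at all generic points must in fact reduce to tensor product with a line bundle, and then to contradict this using the shift formula of Proposition \ref{twistaction}(ii).

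The first ingredient is that every skyscraper $\reg_x$ with $x\in X^{[n]}\setminus X^{[n]}_\Delta$ lies in $\ker R$, where $R$ is the right adjoint of $\Phi^{-1}\circ F$. Under the BKRH-equivalence such a point corresponds to the $\sym_n$-orbit of a tuple $(p_1,\dots,p_n)\in X^n$ with pairwise distinct entries, so $\Phi(\reg_x)$ is supported away from the diagonal $\delta(X)\subset X^n$; hence $R(\reg_x)\simeq[\delta^!\Phi(\reg_x)]^{\sym_n}=0$. Proposition \ref{twistaction}(i) then gives $b(\reg_x)\simeq \reg_x$, and the same holds for $\sqrt b=T_F=\cone(FR\to\id)$ since $FR$ vanishes on $\ker R$. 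Writing a hypothetical standard $b\simeq [k]\circ f_*\circ((-)\otimes L)$ and applying it to $\reg_x$ for $x$ in the dense open $X^{[n]}\setminus X^{[n]}_\Delta$ yields $\reg_{f(x)}[k]\simeq \reg_x$, which forces $k=0$ and $f=\id$. Hence $b\simeq (-)\otimes L$ and is in particular $t$-exact.

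For the contradiction, Proposition \ref{twistaction}(ii) combined with Lemma \ref{relation}(ii) gives
\[
b\circ \Phi^{-1}\circ F\;\simeq\;\Phi^{-1}\circ F\circ H^n[2]\;\simeq\;\Phi^{-1}\circ F\circ\bigl((-)\otimes \omega_X^{-n}\bigr)[-2n+2].
\]
Evaluating both sides at a skyscraper $\reg_x\in \D^b(X)$ for $x\in X$ yields $L\otimes\Phi^{-1}F(\reg_x)\simeq \Phi^{-1}F(\reg_x)[-2n+2]$, which is impossible since $\Phi^{-1}F(\reg_x)\neq 0$ is bounded and $-2n+2\neq 0$ for $n\ge 2$, contradicting the $t$-exactness of $(-)\otimes L$. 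For the $\sqrt b$ case one extracts from $T_F^2\simeq P_F$ the analogous relation $T_F\circ F\simeq F\circ((-)\otimes \omega_X^{-1})[-1]$ and reruns the argument with shift $-1$ in place of $-2n+2$. The whole argument is fairly clean; the only nonobvious ingredient is the disjointness of $\Phi(\reg_x)$ from $\delta(X)$ for reduced cycles $x$, which follows directly from the construction of the BKRH-equivalence.
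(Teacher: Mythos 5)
Your argument is correct and follows essentially the same route as the paper: use that skyscrapers $\C([\xi])$ for $[\xi]$ off $X^{[n]}_\Delta$ lie in $\ker R$ (so $b$ and $\sqrt b$ fix them) to force a hypothetical standard autoequivalence to be $M_L$, then rule out $M_L$ via the shift in $P_F\circ F\simeq F\circ H^{n}[2]$ from Proposition \ref{twistaction}(ii). You merely spell out in more detail the two steps the paper leaves terse, namely the vanishing of $R$ on such skyscrapers and the incompatibility of the nonzero shift $-2n+2$ (resp.\ $-1$ for $\sqrt b$) with the $t$-exactness of $M_L$.
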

\begin{proof}
Let $[\xi]\in X^{[n]}\setminus X^{[n]}_\Delta$, i.e.\ $|\supp \xi|\ge 2$. Then by remark \ref{supp} and proposition \ref{twistaction} (i), we have $b(\C([\xi]))= \C([\xi])$. Let $g=[\ell]\circ \phi_*\circ M_L\in \DAut_{st}(X^{[n]})$ where 
$M_L$ is the functor $E\mapsto E\otimes L$ for an $L\in \Pic X^{[n]}$. Then $g(\C([\xi]))=\C(\phi([\xi]))[\ell]$. Thus, the assumption $b=g$ implies $\ell=0$ and also $\phi=\id$, since $X^{[n]}\setminus X^{[n]}_\Delta$ is open in $X^{[n]}$.
Thus, the only possibility left for $b\in \DAut_{st}(X^{[n]})$ is $b=M_L$ for some line bundle $L$ which can not hold by 
proposition \ref{twistaction} (ii). The proof that $\sqrt b\notin \DAut_{st}(X^{[n]})$ is the same.    
\end{proof}
In \cite{Plo} Ploog gave a general construction which associates to derived autoequivalences of the surface $X$ derived autoequivalences of the Hilbert scheme $X^{[n]}$.
Let $\Psi\in \Aut(\D^b(X))$ with Fourier--Mukai kernel $\mathcal P\in \D^b(X\times X)$. The object 
$\mathcal P^{\boxtimes n}\in \D^b(X^n\times X^n)$ carries a natural $\sym_n$-linearisation given by permutation of the box factors. Thus, it induces a $\sym_n$-equivariant derived autoequivalence $\alpha(\Psi):=\FM_{\mathcal P^{\boxtimes n}}$ of 
$X^n$. This gives the following.
\begin{theorem}[\cite{Plo}]\label{Plo}
The above construction gives an injective group homomorphism 
\[\alpha\colon \Aut(\D^b(X))\to\Aut(\D^b_{\sym_n}(X^n))\cong \Aut(\D^b(X^{[n]}))\,.\]  
\end{theorem}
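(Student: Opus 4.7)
The plan is to verify three claims in turn: (i) each $\alpha(\Psi)$ is a well-defined equivariant autoequivalence, (ii) $\alpha$ respects composition, and (iii) $\ker\alpha$ is trivial. For (i) and (ii), the key observation is that the external tensor power $\mathcal P^{\boxtimes n}$ of a Fourier--Mukai kernel on $X\times X$ carries a canonical $\sym_n$-linearization by permutation of the box factors, and that this operation commutes with convolution of kernels: the identity $(\mathcal P\ast \mathcal Q)^{\boxtimes n}\simeq \mathcal P^{\boxtimes n}\ast \mathcal Q^{\boxtimes n}$ in $\D^b(X^n\times X^n)$ follows from a straightforward base-change calculation with the projections defining the convolution. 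Combined with the canonical isomorphism $\reg_{\Delta_X}^{\boxtimes n}\simeq \reg_{\Delta_{X^n}}$ under the identification $(X\times X)^n\simeq X^n\times X^n$, this yields $\alpha(\id)\simeq\id$ and $\alpha(\Psi\circ\Psi')\simeq \alpha(\Psi)\circ \alpha(\Psi')$; in particular $\alpha(\Psi^{-1})\simeq \alpha(\Psi)^{-1}$, so each $\alpha(\Psi)$ is indeed an autoequivalence.

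For (iii), the aim is to reduce the question to a statement about $\mathcal P$ alone. If $\alpha(\Psi)\simeq\id$ equivariantly, then forgetting the $\sym_n$-structure yields an isomorphism $\mathcal P^{\boxtimes n}\simeq \reg_{\Delta_X}^{\boxtimes n}$ in $\D^b((X\times X)^n)$. Taking supports (the support of a box product is the product of supports) forces $\supp(\mathcal P)=\Delta_X$, hence $\mathcal P\simeq \delta_*\mathcal F$ for some $\mathcal F\in\D^b(X)$ with $\delta\colon X\to X\times X$ the diagonal embedding. Inspection of the highest and lowest nonvanishing cohomology sheaves of the box product shows that $\mathcal F$ is concentrated in degree zero, and since the $n$-fold diagonal $\delta^n\colon X^n\to (X\times X)^n$ is a closed embedding, the isomorphism reduces to $\mathcal F^{\boxtimes n}\simeq \reg_{X^n}$ as coherent sheaves on $X^n$.

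The final step is to deduce $\mathcal F\simeq \reg_X$. Since $\mathcal F^{\boxtimes n}\simeq \reg_{X^n}$ is locally free of rank one, $\mathcal F$ must itself be a line bundle. Restricting $\mathcal F^{\boxtimes n}\simeq \reg_{X^n}$ to the fibre $\{x\}^{n-1}\times X$ over a chosen closed point $x\in X$ then yields an isomorphism $V\otimes_\C \mathcal F\simeq \reg_X$ of line bundles on $X$, where $V=\mathcal F_x^{\otimes(n-1)}$ is a one-dimensional $\C$-vector space; this forces $\mathcal F\simeq \reg_X$, hence $\mathcal P\simeq \reg_{\Delta_X}$ and $\Psi\simeq \id$. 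The main obstacle will be the reconstruction of $\mathcal P$ from its $n$-fold box power in (iii): the support analysis and the fibrewise line-bundle argument need to be set up with care, but none of the ingredients go beyond standard Künneth-type reasoning for coherent sheaves on products of smooth varieties.
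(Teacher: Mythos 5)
The paper offers no proof of this statement --- it is quoted from Ploog's work --- so there is nothing internal to compare against; I am judging your argument on its own terms. Parts (i) and (ii) are fine: the compatibility of convolution with external tensor powers, $(\mathcal P\ast\mathcal Q)^{\boxtimes n}\simeq\mathcal P^{\boxtimes n}\ast\mathcal Q^{\boxtimes n}$, and the identification $\reg_{\Delta_X}^{\boxtimes n}\simeq\reg_{\Delta_{X^n}}$ are standard base-change computations, and they do show that $\alpha$ is a well-defined group homomorphism into $\Aut(\D^b_{\sym_n}(X^n))$.

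The gap is at the very first step of (iii). From an isomorphism of \emph{functors} $\alpha(\Psi)\simeq\id$ on the equivariant category you pass directly to an isomorphism of \emph{objects} $\mathcal P^{\boxtimes n}\simeq\reg_{\Delta_{X^n}}$ in $\D^b(X^n\times X^n)$. This silently uses two things you never establish: a uniqueness-of-kernels theorem (Orlov's result, which here would be needed in the equivariant/stacky setting), and a way of descending an isomorphism of equivariant functors to an isomorphism of the underlying non-equivariant kernels. Neither is automatic: the equivariant transform only satisfies $\For\circ\alpha(\Psi)\simeq\FM_{\mathcal P^{\boxtimes n}}\circ\For$, and knowing that $\FM_{\mathcal P^{\boxtimes n}}$ is the identity on the essential image of $\For$ does not formally give $\FM_{\mathcal P^{\boxtimes n}}\simeq\id$, let alone an isomorphism of kernels. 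The standard route (and Ploog's) avoids kernels entirely: evaluate $\alpha(\Psi)$ on concrete equivariant objects --- the skyscraper sheaf at a point $(x,\dots,x)$ of the small diagonal, which carries a canonical linearisation, and the structure sheaf $\reg_{X^n}$ with trivial linearisation. The first gives $\Psi(\C(x))^{\boxtimes n}\simeq\C(x)^{\boxtimes n}$, whence $\Psi(\C(x))\simeq\C(x)$ for every $x$ by exactly the support/degree/rank analysis you perform for $\mathcal P$; this forces $\Psi\simeq M_L$ for some line bundle $L$, and the second evaluation gives $L^{\boxtimes n}\simeq\reg_{X^n}$, so $L\simeq\reg_X$ by your fibre-restriction argument. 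A secondary flaw: ``$\supp(\mathcal P)=\Delta_X$, hence $\mathcal P\simeq\delta_*\mathcal F$'' is false for complexes with merely set-theoretic support on the diagonal (and even for a sheaf one needs its scheme-theoretic support to be the \emph{reduced} diagonal); the reduction to a single sheaf annihilated by the ideal of $\Delta_X$ has to precede, not follow, that conclusion, although the box-product Künneth argument you sketch does supply the needed input once reordered.
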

\begin{remark}
For every $\phi\in \Aut(X)$ we have $\alpha(\phi_*)=(\phi^n)_*$ where $\phi^n$ is the $\sym_n$-equivariant automorphism of $X^n$ given by $\phi(x_1,\dots,x_n)=(\phi(x_1),\dots,\phi(x_n))$. 
Furthermore, $\phi$ acts on $X^{[n]}$ by the morphism $\phi^{[n]}$, which is given by $\phi^{[n]}([\xi])=[\phi(\xi)]$, and on 
$X^n$ by the morphism $\phi^n$. The isospectral Hilbert scheme $I^nX\subset X^{[n]}\times X^n$ is invariant under the 
induced action of $\Aut(X)$ on $X^{[n]}\times X^n$. 
Thus, the Bridgeland--King--Reid--Haiman equivalence $\Phi=\FM_{\reg_{I^nX}}$ is $\Aut(X)$-equivariant in the sense that $\Phi\circ (\phi^{[n]})_*\simeq (\phi^n)_*\circ \Phi$. 
Hence, $\alpha(\phi_*)\in \Aut(\D^b_{\sym_n}(X^n))$ corresponds to $\phi^{[n]}_*\in \Aut(\D^b(X^{[n]}))$.
For $L\in \Pic X$ we have $\alpha(M_L)=M_{L^{\boxtimes n}}$ where $L^{\boxtimes n}$ is considered as a $\sym_n$-equivariant line bundle with the natural linearisation. Under $\Phi$, the automorphism $M_{L^{\boxtimes n}}$ corresponds to $M_{\mathcal D_L}\in \Aut(\D^b(X^{[n]}))$ where $\mathcal D_L\in \Pic X^{[n]}$ is the line bundle $\mathcal D_L:=\mu^*((L^{\boxtimes n})^{\sym_n})$ 
(see e.g.\ \cite[lemma 9.2]{Kru2}).    
\end{remark}
\begin{lemma}
 \begin{enumerate}
  \item For every automorphism $\phi\in \Aut(X)$ we have $b\circ \alpha(\phi_*)=\alpha(\phi_*)\circ b$ and for $n=2$ also 
$\sqrt b\circ \alpha(\phi_*)= \alpha(\phi_*)\circ \sqrt b$.
  \item For every line bundle $L\in \Pic(X)$ we have $b\circ \alpha(M_L)=\alpha(M_L)\circ b$ and for $n=2$ also 
$\sqrt b\circ \alpha(M_L)= \alpha(M_L)\circ \sqrt b$.
 \end{enumerate}
\end{lemma}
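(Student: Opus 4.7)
The plan is to deduce both statements from the two parts of Lemma \ref{relation}, applied to the $\P^{n-1}$-functor $F=\delta_*\circ\triv$ and the equivalence $\Psi=\alpha(\phi_*)$ (respectively $\Psi=\alpha(M_L)$). The strategy is to express $\Psi\circ F$ as $F\circ\Psi'$ for some autoequivalence $\Psi'$ of $\D^b(X)$ that commutes with $H=\omega_X^\vee[-2]$; then part (ii) of Lemma \ref{relation} gives $\Psi\circ b\simeq P_{\Psi\circ F}\circ\Psi=P_{F\circ\Psi'}\circ\Psi$, while part (i) shows $P_{F\circ\Psi'}\simeq P_F=b$, so the two compositions agree.

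For $\Psi=\alpha(\phi_*)=(\phi^n)_*$, the commutativity $\phi^n\circ\delta=\delta\circ\phi$ gives $(\phi^n)_*\circ\delta_*\simeq\delta_*\circ\phi_*$; since $\phi_*$ clearly preserves the trivial linearisation, this yields $\alpha(\phi_*)\circ F\simeq F\circ\phi_*$. The functor $\phi_*$ commutes with $H$ because $\phi^*\omega_X\simeq\omega_X$ for any automorphism of $X$, and the projection formula gives $\phi_*(E\otimes\omega_X^\vee)\simeq\phi_*E\otimes\omega_X^\vee$. Hence the scheme of the previous paragraph applies with $\Psi'=\phi_*$, proving $b\circ\alpha(\phi_*)\simeq\alpha(\phi_*)\circ b$.

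For $\Psi=\alpha(M_L)=M_{L^{\boxtimes n}}$, the projection formula together with $\delta^*(L^{\boxtimes n})\simeq L^{\otimes n}$ gives
\[
\alpha(M_L)\circ F(E)=L^{\boxtimes n}\otimes\delta_*(E)\simeq\delta_*(L^{\otimes n}\otimes E)=F\circ M_{L^{\otimes n}}(E),
\]
where the $\sym_n$-linearisations match because the natural permutation action on $L^{\boxtimes n}$ restricts to the trivial action along $\delta$. Since every line bundle on $X$ commutes (via the projection formula) with tensoring by $\omega_X^\vee[-2]$, the functor $M_{L^{\otimes n}}$ commutes with $H$, and the same two-step argument proves $b\circ\alpha(M_L)\simeq\alpha(M_L)\circ b$.

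Finally, for $n=2$ the functor $F$ is spherical, and the analogue of Lemma \ref{relation} for spherical functors (proved in \cite{Ann} and invoked in Remark \ref{indy}) has exactly the same form: the spherical twist satisfies $T_{F\circ\Psi'}\simeq T_F$ whenever $\Psi'$ commutes with the twist autoequivalence, and $T_{\Psi\circ F}\circ\Psi\simeq\Psi\circ T_F$ for any equivalence $\Psi$. Applying these two identities to the same $\Psi'=\phi_*$ and $\Psi'=M_{L^{\otimes 2}}$ as above yields the commutation of $\sqrt b$ with $\alpha(\phi_*)$ and $\alpha(M_L)$. No serious obstacle arises; the only point to check carefully is the identification $\alpha(M_L)\circ F\simeq F\circ M_{L^{\otimes n}}$ (with power $n$ rather than $1$), which is harmless since all line bundle twists lie in the centraliser of $H$.
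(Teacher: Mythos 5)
Your proof is correct and follows essentially the same route as the paper: establish the intertwining relations $\alpha(\phi_*)\circ F\simeq F\circ\phi_*$ and $\alpha(M_L)\circ F\simeq F\circ M_{L^{\otimes n}}$, then combine both parts of Lemma \ref{relation} (and the spherical analogue from \cite{Ann} for $n=2$). You simply spell out the verifications that the paper leaves implicit.
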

\begin{proof} 
 We have $\alpha(\phi_*)\circ F\simeq F\circ \phi_*$ and $\alpha(M_L)\circ F\simeq F\circ M_L^n$. The assertions now follow by lemma \ref{relation} (for $\sqrt b$ one has to use the analogous result \cite[proposition 2]{Ann} for spherical twists). 
\end{proof}
Let $G\subset \Aut(\D^b(X^{[n]}))$ be the subgroup generated by $b$, shifts, and $\alpha (\DAut_{st}(X))$.
\begin{prop}
 The map
\[S\colon \Z\times \Z\times (\Aut(X)\ltimes \Pic(X))\to \Aut(\D^b_{\sym_n}(X^n))\quad,\quad (k,\ell,\Psi)\mapsto b^k\circ[\ell]\circ\alpha(\Psi)\]
defines a group isomorphism onto $G$.
\end{prop}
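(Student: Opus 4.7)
The plan is to verify that $S$ is a well-defined group homomorphism onto $G$ and to establish injectivity by evaluating an arbitrary element of its kernel on two test objects. For the homomorphism property, shifts commute with every functor, and the previous lemma provides $b\circ\alpha(\phi_*)=\alpha(\phi_*)\circ b$ and $b\circ\alpha(M_L)=\alpha(M_L)\circ b$. Since $\alpha$ is a homomorphism (theorem \ref{Plo}) and every $\Psi\in\Aut(X)\ltimes\Pic(X)$ is of the form $\phi_*\circ M_L$, this extends to $b\circ\alpha(\Psi)=\alpha(\Psi)\circ b$, and hence $S((k_1,\ell_1,\Psi_1))\circ S((k_2,\ell_2,\Psi_2))=S((k_1+k_2,\ell_1+\ell_2,\Psi_1\Psi_2))$. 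Surjectivity onto $G$ follows by construction: $G$ is generated by $b$, shifts and $\alpha(\DAut_{st}(X))=\alpha(\Z\times(\Aut(X)\ltimes\Pic(X)))$, so the same commutations put every word in these generators into the normal form $b^k\circ[\ell]\circ\alpha(\Psi)$.

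For injectivity, I assume $b^k\circ[\ell]\circ\alpha(\Psi)=\id$ with $\Psi=\phi_*\circ M_L$ and proceed in two steps. First, I evaluate on a skyscraper $\C([\xi])$ with $[\xi]\in X^{[n]}\setminus X^{[n]}_\Delta$. By remark \ref{supp} every object in the image of $\Phi^{-1}\circ F$ is supported on $X^{[n]}_\Delta$, so $\C([\xi])$ lies in $(\im(\Phi^{-1}\circ F))^\perp=\ker R$, and proposition \ref{twistaction}(i) gives $b^k(\C([\xi]))=\C([\xi])$. Since $\alpha(\Psi)$ corresponds under $\Phi$ to $(\phi^{[n]})_*\circ M_{\mathcal D_L}$ and tensoring a skyscraper by a line bundle leaves it unchanged, the equation reduces to $\C(\phi^{[n]}([\xi]))[\ell]\simeq\C([\xi])$. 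This forces $\ell=0$ and $\phi^{[n]}=\id$ on an open dense subset of $X^{[n]}$, hence $\phi=\id$.

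In the second step, I am reduced to $b^k\circ\alpha(M_L)=\id$ and evaluate on $F(\reg_X)=\delta_*\reg_X$. From the proof of the previous lemma, $\alpha(M_L)\circ F\simeq F\circ M_{L^n}$; from proposition \ref{twistaction}(ii) together with $H=\omega_X^{-1}[-2]$, we have $b\circ F\simeq F\circ H^n[2]\simeq F\circ M_{\omega_X^{-n}}\circ[-(2n-2)]$. Iterating yields
\[b^k\circ\alpha(M_L)\bigl(F(\reg_X)\bigr)\simeq F\bigl(L^n\otimes\omega_X^{-nk}\bigr)\bigl[-(2n-2)k\bigr].\]
The right-hand side is cohomologically concentrated in degree $(2n-2)k$, while $F(\reg_X)$ is a sheaf in degree $0$; since $n\ge 2$, this forces $k=0$, and then injectivity of $\alpha$ (theorem \ref{Plo}) applied to $\alpha(M_L)=\id$ yields $L\simeq\reg_X$. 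The main obstacle I anticipate is precisely this shift-tracking step: it relies on the fact that a single application of $b$ introduces a strictly nonzero cohomological shift $-(2n-2)$ on objects in $\im F$, so that no twist by a line bundle can compensate for $k\ne 0$ applications of $b$.
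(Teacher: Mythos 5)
Your proposal is correct and follows essentially the same route as the paper: commutation of $b$ with $\alpha(\DAut_{st}(X))$ and with shifts gives the homomorphism property and surjectivity, skyscrapers $\C([\xi])$ off the boundary kill $\ell$ and $\phi$, the shift $b(FA)\simeq F(A\otimes\omega_X^{-n})[-(2n-2)]$ applied to $F(\reg_X)$ kills $k$, and injectivity of $\alpha$ finishes. Your explicit identification of the twisting line bundle as $\omega_X^{-n}$ is slightly more precise than the paper's unspecified $N$, but the argument is the same.
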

\begin{proof}
By the previous lemma, $b$ indeed commutes with $\alpha(\Psi)$ for $\Psi\in \DAut_{st}(X)$. Together with 
theorem \ref{Plo} and the fact that shifts commute with every derived automorphism, this shows that $S$ is indeed a well-defined group homomorphism with image $G$. Now consider $g=b^k\circ [\ell]\circ \alpha(\phi_*)\circ \alpha(M_L)$ and assume $g=\id$. 
For every point $[\xi]\in X^{[n]}\setminus X^{[n]}_\Delta$ we have $g(\C([\xi]))=\C([\phi(\xi)])[\ell]$ which shows $\ell=0$ and $\phi=\id$, i.e.\ $g=b^k\circ M_{L^{\boxtimes n}}$. Hence, for $A\in \D^b(X)$ its image under $F$ gets mapped to 
$g(FA)=F(A\otimes N)[k(2n-2)]$ for some line bundle $N$ on $X$, which shows that $k=0$. Finally, $g=M_{L^{\boxtimes n}}$ is trivial only if $L=\reg_X$.   
\end{proof}
\begin{remark}
Again, the analogous statement with $b$ replaced by $\sqrt b$ holds.  
\end{remark}
Let now $X$ be a K3-surface. In this case Addington has shown in \cite{Add} that the Fourier--Mukai transform $F_a\colon \D^b(X)\to \D^b(X^{[n]})$ with kernel the universal sheaf $\I_{\Xi}$ is a $\P^{n-1}$ functor with $H=[-2]$. 
Here, $\Xi\subset X\times X^{[n]}$ is the universal family of length $n$ subschemes.
We denote the associated $\P^{n-1}$-twist by $a$ and in case $n=2$ the spherical twist by $\sqrt a$. 
\begin{lemma}
For every point $[\xi]\in X^{[n]}\setminus \partial X^{[n]}$, i.e.\ $\xi=\{x_1,\dots,x_n\}_{\text{red}}$ with pairwise distinct $x_i$,
the object $a(\C([\xi]))$ is supported on the whole $X^{[n]}$. In case $n=2$ the same holds for the object $\sqrt a(\C([\xi]))$.
\end{lemma}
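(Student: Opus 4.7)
The plan is to exploit that $a(\C([\xi]))$ differs from $\C([\xi])$ by an object lying in the image of $F_a$, and that any such image has support equal to all of $X^{[n]}$. First I would compute $R_a(\C([\xi]))$ explicitly. Since both $X$ (a K3 surface) and $X^{[n]}$ have trivial canonical bundle, the kernel of the right adjoint $R_a$ is $\I_\Xi^\vee[2]$. Restricting along $X\times\{[\xi]\}$ gives $R_a(\C([\xi]))\simeq R\sHom_{\reg_X}(\I_\xi,\reg_X)[2]$. Combining the short exact sequence $0\to\I_\xi\to\reg_X\to\reg_\xi\to 0$ with the local Koszul identification $R\sHom(\reg_\xi,\reg_X)\simeq \reg_\xi[-2]$ (valid for a reduced $0$-dimensional subscheme on a K3), one finds that $R_a(\C([\xi]))$ has cohomology sheaves $\reg_X$ in degree $-2$ and $\reg_\xi$ in degree $-1$.

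Next, I would push this forward by $F_a$. From the triangle $F_a(\reg_X)[2]\to F_aR_a(\C([\xi]))\to F_a(\reg_\xi)[1]$ it suffices to note that both outer terms have full support: for each $x_i\in\xi$ the derived restriction $F_a(\C(x_i))$ has support all of $X^{[n]}$, its zeroth cohomology mapping onto the ideal sheaf of the divisor $\Xi_{x_i}=\{[\eta]:x_i\in\eta\}\subset X^{[n]}$; and $F_a(\reg_X)=Rp_{X^{[n]},*}\I_\Xi$ fits in a triangle with $\reg_{X^{[n]}}\oplus\reg_{X^{[n]}}[-2]$ and the tautological bundle $\reg_X^{[n]}$, which forces full support. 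Hence $F_aR_a(\C([\xi]))$---and therefore also $F_aHR_a(\C([\xi]))=F_aR_a(\C([\xi]))[-2]$---has support all of $X^{[n]}$.

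Finally, from the defining triangle $C(\C([\xi]))\to\C([\xi])\to a(\C([\xi]))$ with $C=\cone(F_aHR_a\to F_aR_a)$, and since $\C([\xi])$ is supported at one point, $a(\C([\xi]))$ agrees with $C(\C([\xi]))[1]$ away from $[\xi]$. The main obstacle is to rule out that $C(\C([\xi]))$ collapses to smaller support even though both terms entering the cone have full support. I would handle this by computing, at a generic reduced $[\eta]$ disjoint from $\xi$, the groups $\Hom^*(F_aR_a\C([\xi]),\C([\eta]))\simeq \Hom^*(R_a\C([\xi]),R_a\C([\eta]))$ directly from the cohomology description of $R_a$; since the corresponding groups for $F_aHR_a$ are shifted by $2$ in cohomological degree, they are largely disjoint from those for $F_aR_a$, so the long exact sequence forces $\Hom^*(C(\C([\xi])),\C([\eta]))\neq 0$, yielding $[\eta]\in\supp a(\C([\xi]))$. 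The case of $\sqrt a$ for $n=2$ is then immediate: the spherical twist triangle $F_aR_a(\C([\xi]))\to\C([\xi])\to\sqrt a(\C([\xi]))$, together with the full support of $F_aR_a(\C([\xi]))$ already established, yields full support of $\sqrt a(\C([\xi]))$.
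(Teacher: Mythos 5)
Your first step reproduces the paper's: both arguments show that $R_a(\C([\xi]))$ has cohomology sheaves $\reg_X$ in degree $-2$ and $\reg_\xi$ in degree $-1$ (the paper gets this from the triangle of kernels $\I_\Xi\to\reg_{X\times X^{[n]}}\to\reg_\Xi$ rather than from Koszul duality for $\I_\xi$, but the output is identical). From there the routes diverge. The paper runs the hypercohomology spectral sequence $E_2^{p,q}=\sHo^p(F_a(H^q))$ to show that $\sHo^{-1}(F_aR_a\C([\xi]))$ has positive rank, and then follows that sheaf through the long exact sequences of the two cones to conclude that $\sHo^{-2}(a(\C([\xi])))$ has positive rank, hence full support. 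You instead detect support by pairing with generic skyscrapers $\C([\eta])$; the skeleton of that endgame (the skyscraper $\C([\xi])$ contributes nothing to $\Hom^*(-,\C([\eta]))$, and an extremal degree of $\Hom^*(F_aR_a\C([\xi]),\C([\eta]))$ survives the inner cone because the groups for $F_aHR_a$ are those for $F_aR_a$ shifted by $2$) is correct and is a genuinely different, and rather clean, way to finish.

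Two steps, however, need repair. First, the inference ``both outer terms of $F_a(\reg_X)[2]\to F_aR_a(\C([\xi]))\to F_a(\reg_\xi)[1]$ have full support, hence so does the middle term'' is invalid for a distinguished triangle (the middle term of $A\to B\to A[1]$ can vanish). The conclusion is true, but you must extract it from the long exact sequence at the generic point: the outer terms have generic ranks $n-1$ and $1$ in degrees $-1,0$, respectively $n$ in degree $-1$, which forces $\rank\sHo^{-1}(F_aR_a\C([\xi]))\ge n-1>0$; this is exactly the content of the paper's spectral-sequence step. Second, ``largely disjoint'' hides a real subtlety: computing $\Hom^*(R_a\C([\xi]),R_a\C([\eta]))$ ``directly from the cohomology description'' is not a formal exercise, because the apparent top-degree contribution $\Hom^2\cong\Ext^2_X(\reg_X,\reg_X)=\C$ is killed by a connecting map (Serre duality gives $\Hom^i(F_aR_a\C([\xi]),\C([\eta]))\cong\sHo^{-i}(F_aR_a\C([\xi]))([\eta])^\vee$ for generic $[\eta]$, and $\sHo^{-2}(F_aR_a\C([\xi]))$ is torsion). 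The group that actually survives in top degree is $\Hom^1\cong\sHo^{-1}(F_aR_a\C([\xi]))([\eta])^\vee$, so your Hom computation circles back to needing precisely the positive rank of $\sHo^{-1}(F_aR_a\C([\xi]))$. Once that is established, the rest of your long-exact-sequence chase goes through, as does the $n=2$ case exactly as you state it.
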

\begin{proof}
We set for short $A=\C([\xi])$.
We will use the exact triangle of Fourier--Mukai transforms $F\to F'\to F''$ with kernels $\mathcal P'=\reg_{X\times X^{[n]}}$ and $\mathcal P''=\reg_{\Xi}$. The right adjoints form the exact triangle $R''\to R'\to R$ with kernels $\mathcal Q''=\reg_{\Xi}^\vee[2]$ and $\mathcal Q'=\reg_{X\times X^{[n]}}[2]$. Over the open subset $X^{[n]}\setminus \partial X^{[n]}$, 
the universal family $\Xi$ is smooth and thus on $\Xi_{|X^{[n]}\setminus \partial X^{[n]}}$ the object $\reg_\Xi^\vee$ is a line bundle  concentrated in degree 2. This yields
\[R''(A)=\reg_\xi[0]\quad,\quad R'(A)=\Ho^*(X^{[n]},A)\otimes \reg_X[2]=\reg_X[2]\,.\]
Setting $H^i=\sHo^i(R(A))$ the long exact cohomology sequence gives $H^{-2}=\reg_X$, $H^{-1}=\reg_\xi$, and $H^i=0$ for all other values of $i$. The only functor in the composition $F=\pr_{X^{[n]}*}\left(\pr_X^*(\_)\otimes \I _\Xi\right)$ that needs to be derived is the push-forward along $\pr_{X^{[n]}}$. The reason is that the non-derived functors $\pr_X^*$ as well as $\pr_X^*(\_)\otimes \reg_\Xi$ are exact (see \cite[proposition 2.3]{Sca2} for the latter). Thus, there is the spectral sequence 
\[E_2^{p,q}=\sHo^p(F(H^q))\Longrightarrow E^n=\sHo^n(FR(A))\]
associated to the derived functor $\pr_{X^{[n]}*}$. It is zero outside of the $-1$ and $-2$ row. Now $F'(\reg_\xi)=\Ho^*(X,\reg_\xi)\otimes \reg_{X^{[n]}}=\reg_{X^{[n]}}^{\oplus n}[0]$ and
$F''(\reg_\xi)$ is also concentrated in degree zero since $\Xi$ is finite over $X^{[n]}$. By the long exact sequence we see that all terms in the $-1$ row except for $E_2^{0,-1}$ and $E_2^{1,-1}$ must vanish. Furthermore, 
\[F'(H^{-2})=\Ho^*(X,\reg_X)\otimes \reg_{X^{[n]}}= \reg_{X^{[n]}}[0]\oplus \reg_{X^{[n]}}[-2]\] and $F''(H^{-2})$ is a locally free sheaf of rank $n$ concentrated in degree zero since $\Xi$ is flat of degree $n$ over $X^{[n]}$. This shows that the $-2$ row of $E_2$ is zero outside of degree $0$, $1$, and $2$ and that $E^{1,-2}_2$ is of positive rank. By the positioning of the non-zero terms it follows that $E^{1,-2}_2=E_\infty^{1,-2}$ and thus also $E^{-1}=\sHo^{-1}(FR(A))$ is of positive rank. Furthermore, we can read off the spectral sequence that the cohomology of $FR(A)$ is concentrated in the degrees $-2$, $-1$, and $0$.
Now, by the long exact sequences associated to the cones occurring in the definition of the spherical respectively the $\P^n$-twist 
it follows that $\sHo^{-2}(\sqrt a(A))$ as well as $\sHo^{-2}(a(A))$ are of positive rank.   
\end{proof}
\begin{prop}
\begin{enumerate}
 \item  The subgroup $H$ generated by $a$ and push-forwards along natural automorphisms, i.e.\ autoequivalences of the form $\phi^{[n]}_*=\alpha(\phi_*)$, is isomorphic to $\Z\times \Aut(X)$.
\item $b\notin H=\langle a,\{\phi^{[n]}_*\}_{\phi\in \Aut(X)}\rangle$.
\item $a\notin G=\langle b,[\ell],\alpha(\DAut_{st}(X))\rangle$.
\end{enumerate}
The same results hold for $a$ replaced by $\sqrt a$ and $b$ replaced by $\sqrt b$. 
\end{prop}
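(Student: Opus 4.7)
The plan is to detect the non-triviality of each relation by evaluating on appropriate test objects: $F_a(\E)$ for part (i), and skyscraper sheaves $\C([\xi])$ at reduced $\xi$ for parts (ii) and (iii). As preparation I would first establish the commutativity $a\circ \phi^{[n]}_*\simeq \phi^{[n]}_*\circ a$ for $\phi\in \Aut(X)$: the universal family $\Xi\subset X\times X^{[n]}$ is $\Aut(X)$-equivariant, so $F_a\circ\phi_*\simeq \phi^{[n]}_*\circ F_a$, and since $H=[-2]$ commutes with $\phi_*$ the two halves of lemma \ref{relation} combine to give the claim.

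For part (i), the map $S\colon \Z\times \Aut(X)\to H$, $(k,\phi)\mapsto a^k\circ \phi^{[n]}_*$, is then a surjective group homomorphism. For injectivity, assume $a^k\circ\phi^{[n]}_*=\id$ and apply both sides to $F_a(\E)$: using proposition \ref{twistaction}(ii) and $\omega_X\simeq\reg_X$, one gets $F_a(\phi_*\E)\simeq F_a(\E)[-2(n-1)k]$. Applying the right adjoint $R_a$, for which $R_aF_a\simeq \id\oplus [-2]\oplus\dots\oplus [-2(n-1)]$, and comparing the cohomological gradings of both sides forces $k=0$; then $\phi^{[n]}_*=\id$ gives $\phi=\id$ by testing on a generic reduced subscheme $\{x,y_1,\dots,y_{n-1}\}$.

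For part (ii), suppose $b=a^k\circ \phi^{[n]}_*$ and test on $A=\C([\xi])$ with $\xi$ reduced. By remark \ref{supp}, $A$ lies in $\ker R=(\im F)^\perp$, so $b(A)=A$ by proposition \ref{twistaction}(i); the right side $a^k(\C([\phi^{[n]}(\xi)]))$ is not a skyscraper for $k\neq 0$ by the previous lemma (extended to $|k|\geq 2$ as indicated below). Hence $k=0$ and $b=\phi^{[n]}_*$, forcing $\phi=\id$ and $b=\id$, which contradicts $b\circ F\simeq F[-2(n-1)]\not\simeq F$ coming from proposition \ref{twistaction}(ii). Part (iii) is parallel: assuming $a=b^k\circ [\ell]\circ \alpha(\phi_*)\circ \alpha(M_L)$ and transporting via $\Phi$ to $a=b^k\circ [\ell]\circ \phi^{[n]}_*\circ M_{\mathcal D_L}$, the right side sends $A$ to a shifted skyscraper at the reduced point $[\phi^{[n]}(\xi)]$ (tensoring with $\mathcal D_L$ is an iso on skyscrapers), which is fixed by $b^k$; but $a(A)$ has support on all of $X^{[n]}$ by the preceding lemma.

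The main obstacle is controlling $a^k(A)$ for $|k|\geq 2$ in part (ii), since the preceding lemma treats only $|k|=1$; this is handled either by rerunning its spectral-sequence proof with the roles of left and right adjoints interchanged to obtain the analogue for $a^{-1}$, or more conceptually by the non-trivial action of $a$ on the class of a point in $K(X^{[n]})$, which forces $a^k(A)$ to have a different numerical class for $k\neq 0$. All arguments carry over verbatim to $\sqrt a$ and $\sqrt b$, since the preceding lemma is explicitly stated for $\sqrt a$ and the spherical analogue of lemma \ref{relation} is provided by \cite[proposition 2]{Ann}.
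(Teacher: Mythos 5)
Your handling of (i) and (iii) is essentially the paper's argument: commutativity of $a$ with $\phi^{[n]}_*$ via the $\Aut(X)$-equivariance of $\Xi$ together with lemma \ref{relation}, detection of $k$ in (i) by the shift $a^k$ produces on $\im F_a$, and in (iii) the contrast between the full support of $a(\C([\xi]))$ and the fact that $b^k\circ[\ell]\circ\phi^{[n]}_*\circ M_{\mathcal D_L}$ sends $\C([\xi])$ to a shifted skyscraper. Your observation that $b^k$ fixes $\ker R$ for \emph{every} $k$ is exactly why (iii) needs no case analysis on $k$.

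The gap is in (ii), and it is the one you flag yourself: you need $a^k(\C([\xi]))$ to be a non-skyscraper for all $k\neq 0$, the lemma only gives $k=1$, and neither proposed repair closes the hole. Rerunning the spectral sequence for $a^{-1}$ handles $k=-1$ but says nothing about $a^2(\C([\xi]))=a\bigl(a(\C([\xi]))\bigr)$, since the inner object is no longer a skyscraper and the computation does not iterate. The $K$-theoretic argument fails outright: for the $\P^{n-1}$-twist the inner cone $\cone(F_aHR_a\to F_aR_a)$ has class $[F_aR_a(B)]-[F_aR_a(B)[-2]]=0$ in $\K(X^{[n]})$ because $H=[-2]$ is an even shift, so $a$ acts \emph{trivially} on $\K(X^{[n]})$ (this is the familiar reason $\P^n$-twists are invisible cohomologically) and in particular fixes the class of a point. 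The paper bounds $k$ first and only then invokes the support lemma: evaluating $g=a^k\circ\phi^{[n]}_*$ on $F_a(\reg_X)$ gives $g(F_a(\reg_X))\simeq F_a(\reg_X)[2k(n-1)]$ (up to sign conventions), whereas remark \ref{noshift} applied to the $\P^{n-1}$-twist $b$ shows that $b$ can shift a nonzero object only by $0$ or by $2(n-1)$, because any other shift eigenvector would be orthogonal to the spanning class $\im F\cup\ker R$. Hence $b=g$ forces $k\in\{0,\pm 1\}$; the case $k=0$ is excluded since $b$ is not a standard autoequivalence (or by your own evaluation on skyscrapers at reduced points), and only the remaining case $|k|=1$ requires the support lemma. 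Inserting this step makes your proof of (ii) complete, and the $\sqrt a$, $\sqrt b$ variants then go through as you describe.
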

\begin{proof}
We have for $\phi\in \Aut(X)$ that $\phi^{[n]}_*\circ F_a=F_a\circ \phi_*$ which by lemma \ref{relation} shows that $a$ commutes with $\phi^{[n]}_*$. The reason is that the subvariety $\Xi\subset X\times X^{[n]}$ is invariant under the morphism $\phi\times \phi^{[n]}$. Because of $ a^k\circ \phi^{[n]}_*(F_a(A))=F_a(\phi_*^kA)[k2(n-1)]$ for $A\in \D^b(X)$, there are no further relations in the group $H$ which shows (i). The autoequivalence $g=a^k\circ \phi^{[n]}_*\in H$ has $g(F(\reg_X))=F(\reg_X)[2k(n-1)]$. Thus, by 
remark \ref{noshift} the equality $b=g$ implies $k=1$. But also $b=a\circ\phi^{[n]}_*$ can not hold comparing the values of both sides on $\C([\xi])$ for $[\xi]\in X^{[n]}\setminus \partial X^{[n]}$.    
The assertion (iii) also is shown by comparing the values of the autoequivalences on $\C([\xi])$. 
\end{proof}
Using the same arguments as in \cite[p.11-12 and p.39-40]{Add} one can also show that $b$ does not equal a shift of an autoequivalence induced by a $\P^n$-object on $X^{[n]}$ or of an autoequivalence of the form $\alpha(T_E)$ for a spherical twist $T_E$ on the surface.
In particular, $b$ is an exotic autoequivalence in the sense of \cite{PS}.
\section{$\P^n$-objects on generalised Kummer varieties}
Let $A$ be an abelian surface. There is the summation map 
\[\Sigma\colon A^n\to A \quad,\quad (a_1,\dots,a_n)\mapsto \sum_{i=1}^n a_i\,.\]
We set $N_{n-1}A:=\Sigma^{-1}(0)$. It is isomorphic to $A^{n-1}$ via e.g.\ the morphism
\[A^{n-1}\to N_{n-1}A\quad,\quad (a_1,\dots,a_{n-1})\mapsto (a_1,\dots,a_{n-1},-\sum_{i=1}^{n-1} a_i)\,.\]
The subvariety $N_{n-1}A\subset A^n$ is $\sym_n$-invariant. Thus, we have $N_{n-1}A/{\sym_n}\subset S^nA$.
The \textit{generalised Kummer variety} is defined as $K_{n-1}A:=\mu^{-1}(N_{n-1}A/{\sym_n})$, i.e.\ it is the subvariety of the Hilbert scheme $A^{[n]}$ consisting of all points representing subschemes whose weighted support adds up to zero.
It can be identified with $\Hilb^{\sym_n}(N_{n-1}A)$ and also the other assumptions of the Bridgeland--King--Reid theorem are satisfied which leads to the equivalence 
\[\Phi=\FM_{\reg_{\overline {I^nA}}}\colon \D^b(K_{n-1}A)\to \D^b_{\sym_n}(N_{n-1}A)\]
where $\overline {I^nX}=p^{-1}(N_{n-1}A)$ (see e.g.\ \cite{Nam}). The intersection between the small diagonal $\Delta=\delta(A)\subset A^n$ and $N_{n-1}A$ consists exactly of the points $\delta(a)=(a,\dots,a)$ for $a$ an $n$-torsion point of $A$, i.e.\ $\Delta\cap N_{n-1}A=\delta(A_n)$.
The intersection is transversal, since under the identification $T_{A^n}\cong T_A^{\oplus n}$ the tangent space of $\Delta$ in a point $\delta(a)$ with $a\in A_n$ is given by vectors of the form $(v,\dots,v)\in T_A(a)^{\oplus n}$ whereas the tangent space of $N_{n-1}A$ is given by vectors $(v_1,\dots,v_n)\in T_A(a)^{\oplus n}$ with $\sum_{i=1}^n v_i=0$. Thus, we have for the tangent space of $N_{n-1}A$ in $\delta(a)$ the identification $T_{N_{n-1}A}(\delta(a))\cong N_{\Delta/A^n}(\delta(a))$. Since the $\sym_n$-action on $N_{n-1}A$ is just the restriction of the action on $A^n$, this isomorphism is equivariant.
\begin{theorem}
Let $n\ge 2$. For every $n$-torsion point $a\in A_n$ the skyscraper sheaf $\C(\delta(a))$ is a $\P^n$-object in 
$\D^b_{\sym_n}(N_{n-1}A)$.
\end{theorem}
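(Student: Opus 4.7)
The plan is to verify directly the two defining conditions of a $\P^n$-object for $E := \C(\delta(a))$ in $\D^b_{\sym_n}(N_{n-1}A)$: (a) the twist condition $E\otimes \omega_{N_{n-1}A}\simeq E$, and (b) the identification of $\Ext^*_{\sym_n}(E,E)$ with $\Ho^*(\P^n,\C)$ as a graded $\C$-algebra. Since $E$ is a skyscraper at a smooth point which is moreover fixed by the whole group $\sym_n$, both conditions reduce to equivariant linear algebra on the tangent space $T_{N_{n-1}A}(\delta(a))$. The key input is the identification of $\sym_n$-representations
\[
T_{N_{n-1}A}(\delta(a))\;\simeq\; N_{\Delta/A^n}(\delta(a))\;\simeq\; T_A(a)\otimes \rho,
\]
which combines the transversality computation recorded immediately before the theorem with the description of $N_\delta$ from section \ref{dia}. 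Thus the tangent space has the form $V\otimes \rho$ for the two-dimensional space $V=T_A(a)$ carrying the trivial $\sym_n$-action.

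First I would check (a). The fibre of $\omega_{N_{n-1}A}$ at $\delta(a)$ is $\det(V\otimes\rho)^\vee$, and as a $\sym_n$-representation $\det(V\otimes \rho)=\det(V)^{n-1}\otimes(\det\rho)^{2}$. The standard representation satisfies $\det\rho=\sgn$, so $(\det\rho)^2$ is trivial; and since $A$ is abelian, $\det T_A$ is trivial. Hence the equivariant line bundle $\omega_{N_{n-1}A}$ is equivariantly trivial at the stalk $\delta(a)$, which immediately gives $E\otimes \omega_{N_{n-1}A}\simeq E$.

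For (b) I would compute the Ext algebra via the standard identification at a smooth fixed point,
\[
\Ext^i_{\sym_n}(E,E)\;=\;\bigl[\wedge^i(V\otimes \rho)\bigr]^{\sym_n},
\]
with Yoneda product corresponding to the wedge pairing, as in the proof of Lemma \ref{monadmulti}. Lemma \ref{Scalemma} pins down the invariants: they are one-dimensional in each even degree $2\ell$ with $0\le\ell\le n-1$ and vanish in all other degrees. Corollary \ref{wedgeinva} refines this to a canonical isomorphism $\bigl[\wedge^{2\ell}(V\otimes \rho)\bigr]^{\sym_n}\cong (\wedge^2 V)^{\otimes \ell}$, and the wedge pairing restricts to the natural multiplication on $(\wedge^2 V)^{\otimes *}$. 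So the full Ext algebra is the truncated polynomial ring on a single degree-$2$ generator, the truncation length being controlled by $\dim\rho=n-1$; this has the structure of the cohomology ring of the appropriate projective space.

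The main obstacle is bookkeeping of the truncation exponent so as to match the definition of a $\P^n$-object exactly: the length of the nonvanishing range is governed entirely by $\dim\rho=n-1$, so the verification hinges on correctly aligning the value of $n$ appearing in Lemma \ref{Scalemma} with the target cohomology ring in the definition of a $\P^n$-object. All of the group-theoretic ingredients needed are already supplied by Lemma \ref{Scalemma}, Corollary \ref{wedgeinva}, and Lemma \ref{monadmulti}, so the argument is essentially an assembly of results already established in the paper.
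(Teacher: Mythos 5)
Your proposal follows essentially the same route as the paper: the paper's proof is exactly the chain $\Hom^*_{\D^b_{\sym_n}}(\C(\delta(a)),\C(\delta(a)))\cong \wedge^*T_{N_{n-1}A}(\delta(a))^{\sym_n}\cong \wedge^*N_{\Delta/A^n}(\delta(a))^{\sym_n}$ evaluated via Lemma \ref{Scalemma}, and your additions (the equivariant triviality of $\omega_{N_{n-1}A}$ at the fixed point via $\det\rho=\sgn$, and the identification of the Yoneda product with the wedge pairing) only make explicit what the paper leaves implicit. On the one point you flag as delicate, your bookkeeping is the correct one: since $\dim\rho=n-1$ the invariant algebra is $\C\oplus\C[-2]\oplus\dots\oplus\C[-2(n-1)]\cong \Ho^*(\P^{n-1},\C)$, so $\C(\delta(a))$ is a $\P^{n-1}$-object (as stated in the abstract and as forced by $\dim K_{n-1}A=2(n-1)$, and by the $n=2$ case where the objects are spherical); the ``$\P^n$'' in the theorem statement and the terminal ``$\C[-2n]$'' in the paper's own proof are off-by-one slips.
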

\begin{proof}
Indeed, using the results for the invariants of $\wedge^* N_{\Delta/A^n}$ of section \ref{dia}
\begin{align*}\Hom^*_{\D^b_{\sym_n}}(\C(\delta(a)),\C(\delta(a)))\cong \Ext^*(\C(\delta(a)),\C(\delta(a)))^{\sym_n}&\cong 
 \wedge^* T_{N_{n-1}A}(\delta(a))^{\sym_n}\\&\cong\wedge^* N_{\Delta/A^n}(\delta(a))^{\sym_n}\\&\cong \C\oplus\C[-2]\oplus\dots\oplus\C[-2n]\,.
\end{align*}
\end{proof}
\begin{remark}
For two different $n$-torsion points the skyscraper sheaves are orthogonal which makes the associated twists commute.
Thus, we have an inclusion (see corollary \ref{pnobj})
\[\Z^{n^4}\subset \Aut(\D^b_{\sym_n}(N_{n-1}A))\cong \Aut(\D^b(K_{n-1}A))\,.\] 
\end{remark}
In the case $n=2$ the generalised Kummer variety $K_{n-1}A=K_1A$ is just the Kummer surface $K(A)$. Moreover, there is an isomorphism of commutative diagrams
\[
\begin{CD}
\overline{I^nA}
@>{p}>>
N_1A \\
@V{q}VV
@VV{\pi}V \\
K_1A
@>>{\mu}>
N_1A/{\sym_2}
\end{CD} 
\quad\cong\quad
\begin{CD}
\tilde A
@>{p}>>
A\\
@V{q}VV
@VV{\pi}V\\
K(A)
@>>{\mu}>
A/\iota
\end{CD} 
\]
where $p$ and $\mu$ in the right-hand diagram are the blow-ups of the 16 different 2-torsion points respectively of their image under the quotient under the involution $\iota=(-1)$. For a 2-torsion point $a\in A_2$ we denote by $E(a)$ the exceptional divisor over the 
point $[a]\in A/\iota$. Since $E(a)$ is a rational curve in the $K3$-surface $K(A)$, every line bundle on it is a spherical object
in $\D^b(K(A))$. 
\begin{prop}
 For every 2-torsion point $\Phi^{-1}(\C(\delta(a)))=\reg_{E(a)}(-2)[1]$ holds.
\end{prop}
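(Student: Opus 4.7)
The plan is to compute $\Phi(\reg_{E(a)}(-1))$ and identify it with $\C(\delta(a))$; since $\Phi$ is an equivalence, this is equivalent to the stated identity. Under the right-hand square of the displayed diagram, the equivalence reads $\Phi(\mathcal F)=Rp_*(q^*\mathcal F)$, where $p\colon\tilde A\to A$ is the blowdown and $q\colon\tilde A\to K(A)$ is the quotient by the lifted involution $\tilde\iota$, so the task reduces to computing $Rp_*(q^*\reg_{E(a)}(-1))$.

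The first ingredient is the local structure of $q^*\reg_{E(a)}(-1)$. The map $q$ is flat of degree $2$ and ramified along $\tilde E(a)$ with index $2$---in local coordinates it has the form $(t,u)\mapsto(t^2,u)$. Hence the scheme-theoretic preimage $q^{-1}(E(a))$ equals the first-order thickening $2\tilde E(a)$, and $q|_{\tilde E(a)}\colon\tilde E(a)\to E(a)$ is an isomorphism. In particular $q^*\reg_{E(a)}(-1)$ is rank-one locally free over $\reg_{2\tilde E(a)}$ and restricts to $\reg_{\tilde E(a)}(-1)$ on $\tilde E(a)$.

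The key step is to extract, from the conormal sequence $0\to\I/\I^2\to\reg_{2\tilde E(a)}\to\reg_{\tilde E(a)}\to 0$ with $\I/\I^2=N^\vee_{\tilde E(a)/\tilde A}=\reg_{\tilde E(a)}(1)$, by tensoring over $\reg_{2\tilde E(a)}$ with $q^*\reg_{E(a)}(-1)$, the short exact sequence
\[0\to \reg_{\tilde E(a)}\to q^*\reg_{E(a)}(-1)\to \reg_{\tilde E(a)}(-1)\to 0\]
on $\tilde A$, the left term arising as $\reg_{\tilde E(a)}(1)\otimes\reg_{\tilde E(a)}(-1)=\reg_{\tilde E(a)}$. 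Applying $Rp_*$ is then routine: since $p|_{\tilde E(a)}$ is the constant map onto $a$, one has $Rp_*\reg_{\tilde E(a)}=\C(a)$ (in degree zero) and $Rp_*\reg_{\tilde E(a)}(-1)=0$, so the associated triangle collapses to give $Rp_*(q^*\reg_{E(a)}(-1))\simeq\C(a)=\C(\delta(a))$ concentrated in degree zero.

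The principal obstacle is to carry through the construction of the short exact sequence above and to track the natural $\tilde\iota$-linearizations throughout. The crucial sign enters because $\tilde\iota$ acts on $N^\vee_{\tilde E(a)/\tilde A}$ by $-1$ (as $d\iota_a=-1$ on $T_aA$); tracing this sign through the tensor product determines the $\iota$-linearization of the resulting skyscraper $\C(\delta(a))$ and confirms that it matches the object considered in the preceding theorem.
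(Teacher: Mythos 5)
Your argument is correct in its non-equivariant substance, and it takes a genuinely different route from the paper. The paper's proof simply defers to Proposition \ref{underbkr}, whose argument runs in the direction of $\Phi^{-1}$: one identifies the Fourier--Mukai kernel of $\Phi^{-1}$ as $\reg(E)$ via Grothendieck--Verdier duality for the regular embedding of the isospectral scheme into the product, and then computes $Lp^*$ of the pushed-forward object using a blow-up formula for $\mathcal H^k(p^*j_*\F)$ together with a $\sym_2$-invariants computation. You instead compute $\Phi$ itself on $\reg_{E(a)}(-1)$, exploiting that for $n=2$ the kernel is the structure sheaf of the blow-up $\tilde A$ and that $q\colon\tilde A\to K(A)$ is finite flat, so $\Phi=Rp_*\circ q^*$ with underived $q^*$. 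The identification of the scheme-theoretic preimage with $2\tilde E(a)$, the conormal filtration $0\to\reg_{\tilde E(a)}\to q^*\reg_{E(a)}(-1)\to\reg_{\tilde E(a)}(-1)\to 0$ (using that $\tilde E(a)$ is a $(-1)$-curve upstairs, so $N^\vee_{\tilde E(a)/\tilde A}=\reg_{\tilde E(a)}(1)$), and the vanishing $Rp_*\reg_{\tilde E(a)}(-1)=0$ are all correct and yield $\Phi(\reg_{E(a)}(-1))\cong\C(\delta(a))$ as a complex concentrated in degree zero. This is more elementary than the paper's route: no duality computation for the kernel and no spectral-sequence lemma are needed, at the price of working only for this one object rather than functorially.

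The one step you leave as a promissory note is precisely the step that distinguishes the two candidate answers, namely whether the resulting skyscraper carries the trivial or the alternating $\sym_2$-linearisation: this is decided by the equivariant structure on the subsheaf $N^\vee_{\tilde E(a)/\tilde A}\otimes q^*\reg_{E(a)}(-1)|_{\tilde E(a)}$, where $\tilde\iota$ acts by $-1$ on the conormal bundle and trivially on the restriction to the fixed locus of a sheaf pulled back from the quotient. You should carry this sign out explicitly rather than assert it. Note also that ``confirming the result matches the object of the preceding theorem'' is not a well-posed check: that theorem only computes $\Ext^*(\C(\delta(a)),\C(\delta(a)))^{\sym_n}$, which is insensitive to twisting the skyscraper by a character, so it holds for either linearisation. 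The honest conclusion of your computation is that $\Phi^{-1}$ carries the skyscraper equipped with one specific linearisation --- the one your sign computation produces --- to $\reg_{E(a)}(-1)$; since neither the statement nor the preceding theorem pins the linearisation down, this suffices, but the character should be recorded.
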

\begin{proof}
Using the isomorphism of the commutative diagrams above the proof is nearly the same as the proof of 
proposition \ref{underbkr}.
\end{proof}
There is no known homomorphism $\Aut(\D^b(A))\to \Aut(\D^b(K_nA))$ analogous to Ploog's map $\alpha$. But at least one can 
lift line bundles $L\in \Pic A$ (by restricting $\mathcal D_L$) and group automorphisms $\phi\in \Aut(A)$ (by restricting $\phi^{[n]}$) to 
the generalised Kummer variety. Recently, Meachan has shown in \cite{Mea} that the restriction of Addington's functor to the generalised Kummer variety $K_{n}(A)$ for $n\ge 2$  (i.e.\ the Fourier--Mukai transform with kernel the universal sheaf) is
still a $\P^{n-1}$ functor and thus yields an autoequivalence $\bar a$. Comparing these autoequivalences with those induced by the 
above $\P^n$-objects one gets results similar to the results of section \ref{comp}.

\bibliographystyle{alpha}
\addcontentsline{toc}{chapter}{References}
\bibliography{references}

\begin{thebibliography}{BKR01}

\bibitem[AC12]{AC}
Dima Arinkin and Andrei C{\u{a}}ld{\u{a}}raru.
\newblock When is the self-intersection of a subvariety a fibration?
\newblock {\em Adv. Math.}, 231(2):815--842, 2012.

\bibitem[Add11]{Add}
Nicolas Addington.
\newblock New derived symmetries of some hyperkaehler varieties.
\newblock {\em arXiv:1112.0487}, 2011.

\bibitem[AL12]{AL}
Rina Anno and Timothy Logvinenko.
\newblock On adjunctions for {F}ourier--{M}ukai transforms.
\newblock {\em Adv. Math.}, 231(3-4):2069--2115, 2012.

\bibitem[Ann07]{Ann}
Rina Anno.
\newblock Spherical functors.
\newblock {\em arXiv:0711.4409}, 2007.

\bibitem[BKR01]{BKR}
Tom Bridgeland, Alastair King, and Miles Reid.
\newblock The {M}c{K}ay correspondence as an equivalence of derived categories.
\newblock {\em J. Amer. Math. Soc.}, 14(3):535--554 (electronic), 2001.

\bibitem[DN89]{DN}
J.-M. Drezet and M.~S. Narasimhan.
\newblock Groupe de {P}icard des vari\'et\'es de modules de fibr\'es
  semi-stables sur les courbes alg\'ebriques.
\newblock {\em Invent. Math.}, 97(1):53--94, 1989.

\bibitem[Ful98]{Ful}
William Fulton.
\newblock {\em Intersection theory}, volume~2 of {\em Ergebnisse der Mathematik
  und ihrer Grenzgebiete. 3. Folge. A Series of Modern Surveys in Mathematics
  [Results in Mathematics and Related Areas. 3rd Series. A Series of Modern
  Surveys in Mathematics]}.
\newblock Springer-Verlag, Berlin, second edition, 1998.

\bibitem[Hai01]{Hai}
Mark Haiman.
\newblock Hilbert schemes, polygraphs and the {M}acdonald positivity
  conjecture.
\newblock {\em J. Amer. Math. Soc.}, 14(4):941--1006 (electronic), 2001.

\bibitem[Har66]{Har1}
Robin Hartshorne.
\newblock {\em Residues and duality}.
\newblock Lecture notes of a seminar on the work of A. Grothendieck, given at
  Harvard 1963/64. With an appendix by P. Deligne. Lecture Notes in
  Mathematics, No. 20. Springer-Verlag, Berlin, 1966.

\bibitem[Hor05]{Hor}
R.~Paul Horja.
\newblock Derived category automorphisms from mirror symmetry.
\newblock {\em Duke Math. J.}, 127(1):1--34, 2005.

\bibitem[HT06]{HT}
Daniel Huybrechts and Richard Thomas.
\newblock {$\Bbb P$}-objects and autoequivalences of derived categories.
\newblock {\em Math. Res. Lett.}, 13(1):87--98, 2006.

\bibitem[Huy06]{Huy}
Daniel Huybrechts.
\newblock {\em Fourier-{M}ukai transforms in algebraic geometry}.
\newblock Oxford Mathematical Monographs. The Clarendon Press Oxford University
  Press, Oxford, 2006.

\bibitem[Kru12]{Kru2}
Andreas Krug.
\newblock Tensor products of tautological bundles under the
  {B}ridgeland-{K}ing-{R}eid-{H}aiman equivalence.
\newblock {\em arXiv:1211.1640}, 2012.

\bibitem[Leh99]{Leh}
Manfred Lehn.
\newblock Chern classes of tautological sheaves on {H}ilbert schemes of points
  on surfaces.
\newblock {\em Invent. Math.}, 136(1):157--207, 1999.

\bibitem[LH09]{Has}
Joseph Lipman and Mitsuyasu Hashimoto.
\newblock {\em Foundations of {G}rothendieck duality for diagrams of schemes},
  volume 1960 of {\em Lecture Notes in Mathematics}.
\newblock Springer-Verlag, Berlin, 2009.

\bibitem[Mea12]{Mea}
Ciaran Meachan.
\newblock Derived autoequivalences of generalised {K}ummer varieties.
\newblock {\em arXiv:1212.5286}, 2012.

\bibitem[Nam02]{Nam}
Yoshinori Namikawa.
\newblock Counter-example to global {T}orelli problem for irreducible
  symplectic manifolds.
\newblock {\em Math. Ann.}, 324(4):841--845, 2002.

\bibitem[Plo07]{Plo}
David Ploog.
\newblock Equivariant autoequivalences for finite group actions.
\newblock {\em Adv. Math.}, 216(1):62--74, 2007.

\bibitem[PS12]{PS}
David Ploog and Pawel Sosna.
\newblock On autoequivalences of some {C}alabi--{Y}au and hyperk\"ahler
  varieties.
\newblock {\em arXiv:1212.4604}, 2012.

\bibitem[Sca09a]{Sca1}
Luca Scala.
\newblock Cohomology of the {H}ilbert scheme of points on a surface with values
  in representations of tautological bundles.
\newblock {\em Duke Math. J.}, 150(2):211--267, 2009.

\bibitem[Sca09b]{Sca2}
Luca Scala.
\newblock Some remarks on tautological sheaves on {H}ilbert schemes of points
  on a surface.
\newblock {\em Geom. Dedicata}, 139:313--329, 2009.

\bibitem[ST01]{ST}
Paul Seidel and Richard Thomas.
\newblock Braid group actions on derived categories of coherent sheaves.
\newblock {\em Duke Math. J.}, 108(1):37--108, 2001.

\end{thebibliography}

\end{document}